\newtheorem{theorem}{Theorem}[section]
\newtheorem{proposition}[theorem]{Proposition}
\newtheorem{lemma}[theorem]{Lemma}
\newtheorem{corollary}[theorem]{Corollary}
\theoremstyle{definition}
\newtheorem{definition}[theorem]{Definition}
\newcommand{\sezione}[1]{\section{#1}\setcounter{equation}{0}}
\numberwithin{equation}{section}
\def\R{\mathbb{R}}
\def\C{\mathbb{C}}
\def\e{{\varepsilon}}
\def\di12{\mathcal{D}^{1,2}(\R^n)}
\def\l{{\lambda}}
\def\0l{_{0,\l}}
\def\1l{_{1,\l}}
\def\2l{_{2,\l}}
\def\3l{_{3,\l}}
\def\4l{_{4,\l}}
\def\Om{\Omega}
\def\beq{\begin{equation}}
\def\eeq{\end{equation}}
\newcommand\esf{\mathbb{S}}
\def\sideremark#1{\ifvmode\leavevmode\fi\vadjust{\vbox to0pt{\vss% the remark
 \hbox to 0pt{\hskip\hsize\hskip1em%                          will appear only
 \vbox{\hsize2.1cm\tiny\raggedright\pretolerance10000%          on the side
  \noindent #1\hfill}\hss}\vbox to15pt{\vfil}\vss}}}%
\newtheorem*{theorem*}{Theorem}
\DeclareMathOperator{\mea }{meas}
\DeclareMathOperator{\Sta }{Star}
\begin{document}
\title[Many critical points]{Solutions to general elliptic equations\\ on nearly geodesically convex domains\\ with many critical points}
\author[Enciso]{Alberto Enciso}
\address{Alberto Enciso, Instituto de Ciencias Matemáticas, Calle Nicolás Cabrera 13, 28049 Madrid, Spain e-mail: {\sf aenciso@icmat.es }
}
\author[Gladiali]{Francesca Gladiali}
\address{Francesca Gladiali, Dipartimento SCFMN, Universit\`a  di Sassari  - Via Vienna 2, 07100 Sassari, Italy e-mail: {\sf fgladiali@uniss.it}.}
\author[Grossi]{Massimo Grossi }
\address{Massimo Grossi,  Dipartimento di Scienze di Base Applicate per l’Ingegneria, Universit\`a degli Studi di Roma \emph{La Sapienza} P.le A. Moro 5 - 00185 Roma, e-mail: {\sf massimo.grossi@uniroma1.it}.}

\thanks{2010 \textit{Mathematics Subject classification:35B09,35B40,35Q,35J,35R01}}

\thanks{\textit{Keywords}: Critical points, Riemannian manifold, nodegeneracy}

\maketitle
\begin{abstract}
Consider a complete $d$-dimensional Riemannian manifold $(\mathcal M,g)$, a point $p\in\mathcal M$ and a nonlinearity $f(q,u)$ with $f(p,0)>0$. We prove that for any odd integer $N\ge3$, there exists a sequence of smooth domains $\Om_k\subset\mathcal M$ containing $p$ and corresponding positive solutions $u_k:\Om_k\to\R^+$ to the Dirichlet boundary problem
\begin{equation*}
\begin{cases}
                        -\Delta_g u_k=f(\cdot,u_k) \quad  &
            \mbox{  in }\Om_k\,,\\
u_k=0  & \mbox{  on }\partial\Om_k\,.
\end{cases}
\end{equation*}
All these solutions have exactly $N$ critical points in~$\Om_k$ (specifically, $(N+1)/2$ nondegenerate maxima and $(N-1)/2$ nondegenerate saddles), and the domains $\Om_k$ are  star-shaped with respect to~$p$ and become ``nearly convex'', in a precise sense, as  $k\to+\infty$.
\end{abstract}
\sezione{Introduction}\label{s0}
The calculation of the number of critical points of solutions to the equation $-\Delta_g u = f(u)$ has long been a topic of significant interest among mathematicians. Although a substantial body of literature exists on this subject, many challenges remain unresolved, and several open problems appear to be far from a definitive solution.

In this paper we consider the problem 
\begin{equation}\label{T}
\begin{cases}
                        -\Delta_g u=f(q,u)  &
            \mbox{  in }\Omega\\
u=0  & \mbox{  on }\partial\Omega
\end{cases}
\end{equation}
where $\Omega\subset \mathcal M$ is a bounded smooth domain, $\mathcal M$ is a complete $d$-dimensional   Riemannian manifold, possibly non compact,  and $\Delta_g$ is the Laplace--Beltrami operator on $\mathcal M$. Here $f(q,u)$ is a Lipschitz function on $\mathcal M\times\R$.

Despite its seemingly simple structure, this problem is surprisingly rich in intriguing properties. It is well known that the number of critical points of \eqref{T} is strongly influenced by both the topology and the geometry of $\Omega$ (see, for instance, the classic Poincar\'e--Hopf Theorem).

Let us begin by providing a brief historical overview of the problem, starting with the flat case, that is,  $M = \mathbb{R}^d$ with $d\ge2$, equipped with its natural metric. In this context the most general result is undoubtedly the breakthrough theorem of Gidas, Ni, and Nirenberg. In the influential paper \cite{gnn}, it was proven that if $\Omega$ is a domain which is convex with respect to the directions $x_1,..,x_d$, symmetric with respect to the axes $x_1,..,x_d$,
and if the nonlinearity is autonomous (i.e., $f(x,u)\equiv F(u)$), then the solution has a unique critical point located at the center of symmetry of $\Omega$.

Notably, the result in \cite{gnn} holds for general nonlinearities. Several researchers have attempted to extend the theorem of Gidas, Ni, and Nirenberg by removing the symmetry assumption. While it may seem natural that the uniqueness of the critical point should not depend on the symmetry of the domain, this remains a challenging open problem. In the case of the torsion problem ($f(x,u)\equiv1$), a significant result due to Makar--Limanov~\cite{ml} shows that if $\Om\subset\R^2$ is a convex domain then the function $\sqrt{u}$ is strictly concave, which implies the uniqueness of the critical point of the solution $u$. This result was extended to higher dimensions in \cite{KoLe}.  We also highlight the results in \cite{cc} and \cite{dgm}, which consider semi-stable solutions on convex domains of $\R^2$ and proved the uniqueness of the critical point.

It is important to note that the convexity condition on the domain cannot be easily relaxed. Indeed, in \cite{gg4} it was proven that there exists a class of domains $\Om_\e$ which are close to a convex set (in a suitable sense) such that the corresponding solutions of the torsion problem exhibit multiple critical points. 

When one considers analogous problems on Riemannian manifolds $(\mathcal{M},g)$, the results are rather limited (in fact, they mainly focus on the first eigenfunction).
Let us start with the classic case of the surfaces, basically with the model cases  $\esf^2$ (round sphere) and $\mathbb H^2$  (hyperbolic plane), endowed with their natural metrics.
If $\Omega=\esf^2$ and $f\equiv1$ , the uniqueness of the critical point for solutions of \eqref{T}, under the conditions that $\partial\Om$ has positive curvature and the diameter of~$\Om$ is less than $\frac\pi2$, was established in the recent paper \cite{gp}. 

Concerning other nonlinearities, as mentioned earlier, much attention has been given to the case of the eigenfunctions.  In \cite{swyy}  the authors discuss some $\log$-concavity estimates for the first eigenfunction on convex Euclidean domains, providing a short proof of the result of Brascamp and Lieb \cite{bl}. This approach was later adapted in \cite{lw} to prove the $\log$-concavity of the first Dirichlet eigenfunction in geodesically convex domains of $\mathbb \esf^n$. This last result, together with \cite{w}, allows to prove the {uniqueness and non-degeneracy} of the critical point. For related results on positively curved surfaces, see also \cite{khan}.

In the case of hyperbolic space $\mathbb H^2$, the situation is more complex. In fact, the assumption of strict convexity alone does not generally  suffice to ensure the uniqueness of the critical point, as shown for the first eigenfunction in \cite{bcnsww}.  However, whenever $\Om\subset\mathbb H^2$, stronger assumptions such as horoconvexity~\cite{gp} can restore the uniqueness of the critical point for solutions to the torsion problem.

For general Riemannian manifolds,  it is well known that the number of critical points of solutions strongly depend on the metric~$g$. For example, in \cite{eps} it is proved that, given a compact manifold $M$ of dimension $d\geq 3$ and integers~$N,l$, there exits a metric $g$ such that  the lowest~$l$ eigenfunctions of the Laplacian have at least $N$ non-degenerate critical points. Similar results have been obtained in \cite{bls,epss,ms}. However, it is reasonable to expect that, under suitable assumptions on the geodesic convexity, on the curvature of $\mathcal{M}$, and on the nonlinearity~$f$, the uniqueness of the critical point of the solution to \eqref{T} can be established. When the manifold has constant sectional curvature, results of this kind are of course well known.

Our objective in this paper is to show that, as soon as the convexity assumption is slightly relaxed, one cannot hope to control number of critical points of solutions. Furthermore, this phenomenon is independent of the nonlinearity and of the choice of the metric.

Specifically, our main result, which builds on  \cite{gg4}, ensures that, given any nonlinearity~$f$, on any Riemannian manifold, and around any point on the manifold, it is possible to construct a class of domains $\Om_k$ that are ``almost geodesically convex'' as $k\to+\infty$, in the precise sense introduced in Definition \ref{alcon}, where the boundary problem \eqref{T} admits a positive solution with arbitrarily many critical points. In the statement, $\mea_g(\Om)$ denotes the Riemannian measure of the set~$\Om\subset\mathcal M$. We recall that $q$ is a {\em critical point}\/ of a function~$u$ if $\nabla u(q)=0$, and that this critical point is {\em nondegenerate}\/ if its Hessian matrix $\nabla^2 u(q)$, computed in any local chart, is invertible.

\begin{theorem}\label{T1}
Let $(\mathcal M,g)$ be a smooth $d$-dimensional  complete  Riemannian manifold, possibly non compact.  Assume that $ f(q,u)$ and $\frac{\partial  f}{\partial u}(q,u)$ are Lipschitz in a neighborhood of $(p,0)$,  and that $f(p,0)>0$.
For any fixed integer $n\geq 2$ and any point $p\in \mathcal M$ there exists a sequence $\widetilde \Omega_k\ni p$ of smooth bounded domains in $\mathcal M$, with $k\geq1$, such that:
\begin{itemize}
\item[a)] For every $k$, there exists a solution $\widetilde u_k$ in $\widetilde\Omega_k$ to \eqref{T}, that has  exactly $2n-1$  critical points: $n$ nondegenerate maxima and $n-1$ nondegenerate saddles. 
\item[b)] The domain $\widetilde\Omega_k$ is geodesically starshaped with respect to $p$. That is, there exists a starshaped region $U_k\subset T_p\mathcal M$ such that the exponential map $\exp_p$ maps $U_k$ diffeomorphically onto $\widetilde\Omega_k$.
\item[c)] The sequence of sets $\widetilde\Omega_k$ is almost geodesically convex in the sense that
\begin{equation}\label{ge}
\lim_{k\to\infty}\frac{\mea_g\big(\widetilde\Omega_k\setminus \Sta_g (\widetilde\Omega_k)\big)}{\mea_g (\widetilde\Omega_k)}=0\,.
\end{equation}
Here $\Sta_g (\widetilde\Omega_k)$ denotes the set of points of  $\widetilde\Omega_k$ with respect to which $\widetilde\Omega_k$ is geodesically starshaped (see Definition \ref{star-D}).
\end{itemize}
\end{theorem}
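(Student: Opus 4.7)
My approach is to blow up \eqref{T} at $p$ via normal coordinates so that, after rescaling, the equation becomes a small perturbation of the flat torsion problem with constant right-hand side $f(p,0)>0$, to which the construction of \cite{gg4} applies; the domains $\widetilde\Om_k$ would then be obtained as exponential images of small dilations of the Euclidean domains produced there.

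In geodesic normal coordinates centered at $p$ one has $g_{ij}(x)=\d_{ij}+O(|x|^2)$ and $\G^k_{ij}(x)=O(|x|)$, so on the scale $x=\e y$ the Laplace--Beltrami operator decomposes as $\Delta_g=\Delta+\e\,A(\e y;\partial_y)+\e^2\,B(\e y;\partial_y^2)$, with $A,B$ uniformly bounded operators of order one and two respectively. I would fix the sequence of Euclidean domains $\Om_k\subset\R^d$ produced in \cite{gg4}: each $\Om_k$ is starshaped with respect to the origin, the sequence satisfies the flat analogue of \eqref{ge}, and the torsion problem $-\Delta v=f(p,0)$ in $\Om_k$ with $v|_{\de\Om_k}=0$ admits a nondegenerate solution $v_k$ having exactly $n$ nondegenerate maxima and $n-1$ nondegenerate saddles. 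For a parameter $\e_k>0$ to be chosen, I would set $\widetilde\Om_k:=\exp_p(\e_k\Om_k)$ and look for a solution of \eqref{T} of the form $u(\exp_p(\e_k y))=\e_k^2(v_k(y)+\phi(y))$, $y\in\Om_k$, with $\phi$ small in $C^{2,\a}(\overline{\Om_k})$. A direct substitution shows that $\phi$ satisfies $-\Delta\phi=\e_k\,F_k(y,\phi,\na\phi,\na^2\phi)$, where $F_k$ is Lipschitz in its last three arguments thanks to the Lipschitz hypotheses on $f$ and $f_u$ and to the smoothness of $g$.

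Since $-\Delta\colon C^{2,\a}_0(\overline{\Om_k})\to C^{\a}(\overline{\Om_k})$ is an isomorphism, a Banach fixed point argument (equivalently, the implicit function theorem) produces a unique small $\phi_k=O(\e_k)$ as soon as $\e_k$ is small enough relative to the quantitative invariants of $\Om_k$. Nondegeneracy of the critical points of $v_k$ then gives (a): applying the implicit function theorem in a neighborhood of each critical point of $v_k$, one obtains that $v_k+\phi_k$ has the same number of critical points as $v_k$, of the same Morse index; because nondegeneracy is a chart-independent notion, pulling back by $\exp_p$ yields the same critical structure for $\widetilde u_k$. Part (b) is immediate from the definition, since $\widetilde\Om_k$ is the exponential image of the starshaped set $U_k:=\e_k\Om_k\subset T_p\mathcal M$. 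For (c), I would use that in the scaled chart Euclidean segments and geodesics joining points of $\e_k\Om_k$ differ by $O(\e_k^2)$, so that, up to an error of this order, the Euclidean starlike kernel of $\Om_k$ embeds into $\e_k^{-1}\exp_p^{-1}(\Sta_g(\widetilde\Om_k))$; combined with the flat analogue of \eqref{ge} from \cite{gg4}, this delivers \eqref{ge} for $\e_k$ sufficiently small.

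The main obstacle I foresee is not the existence step itself but the uniformity of the perturbation argument in~$k$. The domains $\Om_k$ degenerate geometrically (pinching towards an almost convex limit), and as a consequence the operator norm of $(-\Delta)^{-1}$ on $\Om_k$, the $C^{2,\a}$-norm of $v_k$, and the lower bound on $|\na^2 v_k|$ at the critical points of $v_k$ all deteriorate with~$k$. One therefore has to choose $\e_k\to 0$ quickly enough, as an explicit function of these quantitative invariants, both to close the fixed point argument for $\phi_k$ and to ensure that the Morse structure survives the perturbation.
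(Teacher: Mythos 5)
Your overall strategy coincides with the paper's: fix the Euclidean almost-convex domains $\Om_{\e_k}$ and the torsion solution from the flat result, rescale by a second small parameter (your $\e_k$, the paper's $\eta_k$) in geodesic normal coordinates so that $\Delta_g$ becomes $\eta^{-2}(\Delta_{\R^d}-\eta^2 T_\eta)$, apply the implicit function theorem to obtain a solution close to $\eta^2 u_\e$, and transfer the nondegenerate Morse structure back to the manifold. You also correctly identify the essential diagonalization: since all the quantitative invariants of $\Om_{\e_k}$ deteriorate as $k\to\infty$, one must choose $\eta_k$ depending on $\e_k$; this is exactly the structure of the paper's Proposition~\ref{prog}, Corollary~\ref{corg}, and the final choice $\eta_k\in(0,\eta_{1,\e_k})$.

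There is however a genuine gap in your treatment of part~(c). You claim that, because Euclidean segments and geodesics differ by $O(\e_k^2)$, ``up to an error of this order'' the Euclidean star kernel $\Sta(\Om_k)$ embeds into $\e_k^{-1}\exp_p^{-1}(\Sta_g(\widetilde\Om_k))$. This does not follow directly, for two reasons. First, the star kernel $\Sta(\Om_\e)$ is defined by the non-strict inequality $(x-\xi)\cdot\nu(x)\ge 0$ for all $x\in\partial\Om_\e$, and therefore is not stable under $C^2$-small deformations of the boundary: points where the inequality is an equality can leave the kernel under arbitrarily small perturbations. The paper circumvents this by introducing the subset $D_\e=\{u_\e>A_0\e^{1/n}\}$ (see Proposition~\ref{lem1}), showing there the strict quantitative bound $(x-\xi)\cdot\nu(x)\ge c_0>0$, deducing the perturbation-stability of the inclusion $D_\e\subset\Sta$ (Corollary~\ref{C.perturbation}), and then separately proving $\mea(\Om_\e\setminus D_\e)/\mea(\Om_\e)\to 0$ (Lemma~\ref{lem2}). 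Second, geodesic star-shapedness of $\widetilde\Om_k$ with respect to an arbitrary $q\in\widetilde\Om_k$ must be checked in normal coordinates centered at~$q$, not at~$p$; in those coordinates the preimage of $\widetilde\Om_k$ is not simply a translate of $\e_k\Om_k$ but a genuinely deformed domain $\Om_{k,\xi}=\eta_k^{-1}\psi_{k,\xi}(\eta_k\Om_{\e_k})$ with $\psi_{k,\xi}=\exp_{\exp_p(\eta_k\xi)}^{-1}\circ\exp_p$. Your argument implicitly uses $\exp_p^{-1}$ throughout and so misses this change of basepoint. Both issues are resolved by the paper's quantitatively-stable set $D_\e$ together with the estimate $\sup_\xi\|\Psi_{k,\xi}-I\|_{C^2}\le C\eta_k$, and you would need an equivalent device to close~(c).
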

The idea of the proof of Theorem \ref{T1} is the following. We start with a domain $\Om_\e\subset\R^d$ and with the solution $u_\e$ to the torsion problem
\begin{equation}\label{tor}
\begin{cases}
                        -\Delta u=1  &
            \mbox{  in }\Omega_\e\,,\\
u=0  & \mbox{  on }\partial\Omega_\e\,.
\end{cases}
\end{equation}
By~\cite{gg4}, we can take a domain  $\Om_\e$ which is ``nearly convex'' as $\e\to0$, and for which $u_\e$ has $n\ge2$ nondegenerate maxima and $n-1$ nondegenerate saddles (and no other critical points). Now we regard $0<\e\ll1$ as small but fixed.  Set $\Phi_\eta(x):=\exp_p(\eta x)$,
where $\exp_p$ is the Riemannian exponential map at~$p$. The ideas is to take $\widetilde\Omega_\eta\subset\mathcal M$ as the image of $\Om_{\e}$ under $\Phi_\eta$, for suitably small $\eta,\e$ depending on~$k$, and  to construct~$\widetilde u_k$ as a small perturbation of  $\widetilde u_\eta:=\eta^2 u_\e\big(\Phi_\eta^{-1}(-)\big):\widetilde\Omega_\eta\to\R^+$. 

At this point,  two serious problems arise:
\begin{itemize}
\item To prove that $\widetilde\Omega_\eta$ satisfies the properties in $a)$, $b)$ and $c)$ in Theorem \ref{T1}.
\item To find a suitably small correction of $\widetilde u_\eta$ yielding a solution to \eqref{T}.
\end{itemize}
To overcome these difficulties, we will need to prove several  delicate additional estimates for the various objects appearing in the construction.

The paper is organized as follows: in Sections \ref{s2} and \ref{s3} we establish some additional properties on the domain $\Om_\e$ which are not covered in~\cite{gg4}. In Section \ref{s4} we introduce a suitably small scale parameter~$\eta$ and analyze the pair $(\widetilde\Om_\eta,\widetilde u_\eta)$. Finally in Section \ref{s5}, we prove Theorem \ref{T1}.

\sezione{Results on $\R^d$}\label{s2}
The proof of Theorem \ref{T1} is based on some results for the torsion problem in the Euclidean space $\R^d$ for $d\geq 2$,  in \cite{gg4} (case $d=2$) and \cite{dg2} (case $d\geq3$). Here we give a slightly different statement of these previous results, because we will need some more properties of the solution $u$ of the torsion problem in $\Omega\subset \R^d$ to prove Theorem \ref{T1}. 
Nevertheless the main ideas of this section can be found in \cite{gg4}.\\

For $d\geq 2$ we consider the torsion problem 
\begin{equation}\label{1}
\begin{cases}
                        -\Delta u=1  &
            \mbox{  on }\Omega\\
u=0  & \mbox{  on }\partial \Omega
\end{cases}
\end{equation}
where $\Omega$ is a smooth bounded domain of $\R^d$.
For $n\ge2$ let us consider arbitrary real numbers
\beq \label{points-a}
0<a_1<a_2<..<a_n\in(0,+\infty)
\eeq 
and the holomorphic function $F:\C\to\C$ given by
\begin{equation}\label{b1}
F(z)=-\Pi_{i=1}^n(z-a_i)(z+a_i)=-\sum_{i=0}^nb_iz^{2i}
\end{equation}
where of course $b_n=1$ and the coefficients $b_i$ depend only on the points $a_i$ in \eqref{points-a}.   

Next, we consider the function $v_n:\R^2\to\R$ defined as 
\begin{equation}\label{eq:v-re}
v_n(x_1,x_2)=Re\Big(F(z)\Big)
\end{equation}
for $z=x_1+ix_2$,
(where $Re$ stands for the real part of a complex function),
which is harmonic. By construction
\begin{equation}\label{eq:v-pj}
v_n(x_1,x_2)=-\sum _{j=0}^n b_{j}P_{2j}(x_1,x_2)
\end{equation}
where $b_{n}=1$ and $P_{2j}$ are some homogeneous harmonic polynomials of degree $2j$ (in which, for symmetry reasons,  $x_1$ and $x_2$ appear only with even degree).  
It might be useful to write
\beq\label{funct-vn}
 v_n(x_1,x_2)=-x_1^{2n}+\tilde b_0x_1^{2n-2}x_2^2+\tilde b_1 x_1^{2n-2}+Q(x_1,x_2)
\eeq
where $Q(x_1,x_2)$ is a polynomial of degree at most $2n-4$ in $x_1$ and $\tilde b_0=n(2n-1)$.

Finally we introduce the function $u_\e:\R^d\to\R$ as
\beq \label{sol}
u_\e(x_1,\dots,x_d):=\frac 1{2(d-1)} (1-x_2^2-\dots-x_d^2)+\e v_n(x_1,x_2)
\eeq
which satisfies the torsion problem 
\[-\Delta u_\e=1\ \ \text{ in }\R^d.\]
Observe that, by construction,  $u_\e$ is even in all the variables.

Let us denote by $f:\R\to\R$ the function
\begin{equation}\label{f}
f(x_1):=v_n(x_1,0).
\end{equation}
We immediately get that $f$ is even, $f(\pm a_1)=..=f(\pm a_n)=0$ and $\pm a_i$ are all simple zeroes for $f$.  Moreover $\lim_{x_1\to \infty}f(x_1)=-\infty$ and $f$ has $n$ nondegenerate maximum points and $n-1$ nondegenerate minimum points in $[-a_n,a_n]$.  The fact that they are all nondegenerate follows since $f(x_1)$ is a polynomial of degree $2n$ with $2n$ simple zeroes and since $f''(x_1)$ has exactly $2n-2$ zeros between the zero of $f'(x_1)$.\\

 Before  stating our main result on the function $u_\e$, we need the following definition:
\begin{figure}[t]
\centerline {\includegraphics[scale=0.3]{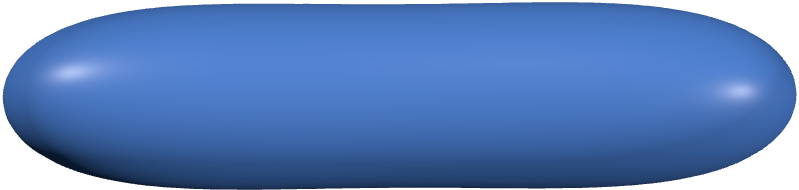}}
\vskip-0.2cm
\caption{The case $d=3$, $n=3$ and $\e=0.01$}
\end{figure}
\begin{definition} \label{star-D}
For any $\Omega\subset\R^d$, we denote by $\Sta (\Omega)$ the set of points $x\in \Omega$ with respect to which $\Omega$ is starshaped.
\end{definition}
In the following we will denote by $B^{d-1}(0,r)$ the ball of radius $r$ in $\R^{d-1}$ centered at $0$  and by $C$ various positive constants independent of $\e$.
The main result of this section is the following:
\begin{theorem}\label{T2}
Let $n\geq 2$. There exists $\e_0>0$ such that for any $0<\e<\e_0$ there exists a smooth bounded domain $\Omega_\e\subset \R^d$ such that 
\begin{itemize}
\item[i)] $u_\e$ is the unique solution to \eqref{1} in $\Omega_\e$ and $\|u_\e\|_{C^l(\Omega_\e)}\leq C$ for all $l$. 
Moreover
$\Omega_\e\subset\left(-D\e^{-\frac 1{2n}},D\e^{-\frac 1{2n}}\right)\times B^{d-1}(0,\sqrt 2)$ for $D=\left(\frac 1{d-1}\right)^{\frac 1{2n}}$.
\item[ii)]  $\Omega_\e$ is smooth and starshaped with respect to the origin.
\item[iii)]  The set $\{(x_1,..,x_d)\in\Om_\e: u_\e(x_1,..,x_d)>\frac 1{2(d-1)}\}$ has $n$ connected components. In each of these components, 
\begin{equation}\label{stmax}
\max u_\e(x_1,..,x_d)>\frac 1{2(d-1)} +C\e.
\end{equation}
\item[iv)]The solution $u_\e$ has exactly $n$ nondegenerate local maximum points and $n-1$ nondegenerate saddle points in $\Omega_\e$.
\item[v)] $\Om_\e$ is ``almost convex'' in the sense that its Lebesgue measure satisfies
$$\mea \big(\Omega_\e\setminus \Sta (\Omega_\e)\big)\to0\quad\hbox{as }\e\to0.$$ 
\item[vi)] If $F$ is the cylinder $F:=  \{(x_1,x_2,..,x_d)\in \R^d \text{ such that } (x_2,..,x_d)\in B^{d-1}(0,1)\}$ and $Q$ is any compact set of $\R^d$ then $\Omega_\e\cap Q\to F\cap Q$ as $\e\to 0$.
\end{itemize}
\end{theorem}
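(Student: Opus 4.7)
The plan is to take $\Omega_\e$ to be the connected component of the super-level set $\{u_\e>0\}$ containing the origin, where $u_\e$ is the explicit polynomial in \eqref{sol}. Since $-\Delta u_\e\equiv 1$ on all of $\R^d$ and $u_\e\equiv 0$ on $\partial\Omega_\e$ by construction, item i) reduces to showing that $\Omega_\e$ is bounded, smooth, and unique (so that $u_\e$ is really the unique solution of \eqref{1}), together with uniform $C^l$-estimates, which for a polynomial amount to bounding its degree and coefficients. The box inclusion in i) follows from the decomposition \eqref{funct-vn}: for $|x_1|\geq D\e^{-1/(2n)}$ the term $-\e x_1^{2n}$ overwhelms all remaining contributions and $u_\e<0$. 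Smoothness of $\partial\Omega_\e$ is obtained from $\nabla u_\e\neq0$ on the zero set: in the region $|x_1|=O(1)$ by comparison with $u_0=\frac{1}{2(d-1)}(1-x_2^2-\cdots-x_d^2)$, whose gradient does not vanish on $\partial F$; in the region $|x_1|\asymp \e^{-1/(2n)}$ from the dominant behaviour $\partial_{x_1}u_\e\sim -2n\e x_1^{2n-1}$. Starshapedness with respect to the origin (item ii) follows in the same two regimes from $\langle x,\nabla u_\e(x)\rangle<0$ on $\partial\Omega_\e$: in the central region from $\langle x,\nabla u_0\rangle=-\frac{1}{d-1}(x_2^2+\cdots+x_d^2)<0$, and at the caps from $\langle x,\nabla u_\e\rangle\sim -2n\e x_1^{2n}$.

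Item iv), the heart of the theorem, exploits the separation-of-variables structure of $u_\e$: the nonlinear variables are only $(x_1,x_2)$, while $(x_3,\ldots,x_d)$ enter only through $-\frac{1}{2(d-1)}(x_3^2+\cdots+x_d^2)$. Hence any critical point of $u_\e$ has $x_3=\cdots=x_d=0$, and the problem reduces to the two-dimensional analysis of \cite{gg4}. The Cauchy--Riemann identity $\partial_{x_2}v_n(x_1,0)=0$ (true because $v_n=\mathrm{Re}\,F$ and $F'$ is real on $\R$) combined with the equation $-x_2/(d-1)+\e\partial_{x_2}v_n=0$ and an implicit-function / degree argument à la \cite{gg4} confines the critical points to the axis $x_2=0$, where they coincide with the $2n-1$ critical points of $f$ described after \eqref{f}. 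The Hessian block in the $(x_3,\ldots,x_d)$ directions is the negative definite matrix $-\frac{1}{d-1}\mathrm{Id}$, so the $n$ maxima of $f$ give $n$ nondegenerate maxima of $u_\e$ and the $n-1$ minima of $f$ give $n-1$ nondegenerate saddles of Morse index $d-1$.

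Item vi) is immediate from the uniform convergence $u_\e\to u_0$ on compact sets, which forces the corresponding positive sets to converge on compacts. Item iii) follows by quantifying how much $u_\e$ exceeds $\frac{1}{2(d-1)}$ at each maximum of $f$: since $\max f\geq C>0$ and the perturbation is controlled, the superlevel set $\{u_\e>\frac{1}{2(d-1)}\}$ splits into $n$ connected components (one around each maximum of $f$) on which $\max u_\e\geq\frac{1}{2(d-1)}+C\e$. For item v), one writes $\Omega_\e$ as the disjoint union of a ``core'' $\Omega_\e\cap\{|x_1|\leq R\}$, whose Lebesgue measure is bounded uniformly in $\e$ for every fixed $R$, and two ``caps'' $\Omega_\e\cap\{|x_1|>R\}$, which are contained in a thin tube very close to the convex cylinder $F$. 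One verifies directly that every point of the tube with $|x_1|$ large enough lies in $\Sta(\Omega_\e)$, so $\Omega_\e\setminus\Sta(\Omega_\e)$ is confined to the core and has uniformly bounded measure, while $\mea(\Omega_\e)\asymp \e^{-1/(2n)}\to\infty$.

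The main obstacle I expect is v). Unlike starshapedness with respect to the origin, which reduces to a one-parameter sign check on $\partial\Omega_\e$, controlling $\Sta(\Omega_\e)$ requires showing that for a point $y$ in one of the two caps, \emph{every} chord from $y$ to the boundary stays inside $\Omega_\e$. This fails near the pinched ``necks'' where the saddles of $u_\e$ sit and the boundary curves inward, so one has to locate precisely how far these necks extend along the $x_1$-axis, bound the non-starshaped set by the set of $y$ whose line to a boundary point passes through a neck, and prove that this set stays within a fixed compact set as $\e\to0$. This is where the quantitative estimates on $u_\e$ from \cite{gg4, dg2} are essential, and also where the theorem has content beyond those references.
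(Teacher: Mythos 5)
Your proposal follows the paper's strategy for items i), ii), iii), iv) and vi) in its essentials: $\Omega_\e$ is defined as the connected component of $\{u_\e>0\}$ through the origin, the box inclusion comes from the leading term $-\e x_1^{2n}$, smoothness and starshapedness with respect to the origin come from the sign of $x\cdot\nabla u_\e$ on $\partial\Omega_\e$ (the paper does this in one stroke via Euler's formula for the homogeneous pieces $P_{2j}$ of $v_n$, rather than your two-regime split), and iv) reduces to the one-variable function $f(x_1)=v_n(x_1,0)$ with a block-diagonal Hessian. Your Cauchy--Riemann observation that $\partial_{x_2}v_n(x_1,0)=0$, so the critical points lie \emph{exactly} on $\{x_2=\dots=x_d=0\}$, is a small simplification over the paper, which instead shows $x_2^\e=O(\e)$ and passes to the limit; either route needs the degree/Poincar\'e--Hopf count at the end, which the paper supplies and you cite.

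The genuine gap is in v), and it is twofold. First, your argument at best proves that $\mea\big(\Omega_\e\setminus\Sta(\Omega_\e)\big)$ is bounded uniformly while $\mea(\Omega_\e)\to\infty$; but v) asserts $\mea\big(\Omega_\e\setminus\Sta(\Omega_\e)\big)\to 0$, a strictly stronger statement, which the paper actually establishes (it shows the complement is $O(\e^{1/(2n)})$). Second, and more seriously, the proposed geometric decomposition is not the right one: the set you need to exclude is not a fixed compact ``core''. Because the boundary oscillates by only $O(\e)$ around the cylinder, the points $\xi$ that fail to be starshaped centres form a thin shell near $\partial\Omega_\e$ that runs along the \emph{entire} domain, not just past the saddles. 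The paper's key device is to take $D_\e:=\{\xi\in\Omega_\e: u_\e(\xi)>A_0\e^{1/n}\}$, a superlevel set of $u_\e$ itself, and to prove the pointwise inequality, for $x\in\partial\Omega_\e$,
\begin{equation*}
(x-\xi)\cdot\nabla u_\e(x)\;\le\;-\,u_\e(\xi)+O\big(\e^{1/n}\big),
\end{equation*}
which follows from Euler's identity applied to the $P_{2j}$, Young's inequality on the cross term $2n\xi_1 x_1^{2n-1}$, and the boundary relation $u_\e(x)=0$. This gives $D_\e\subset\Sta(\Omega_\e)$ at once, and then an explicit change of variables ($y_1:=(2(d-1)\e)^{1/(2n)}x_1$, spherical coordinates) shows $\mea(\Omega_\e\setminus D_\e)=O(\e^{1/(2n)})\to 0$. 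Your plan of ``locating the necks and bounding the bad chords'' does not replace this identity; without it you cannot get the shell to have vanishing measure, which is the part of Theorem~\ref{T2} that goes beyond \cite{gg4,dg2}.
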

Here, we prove {\rm i)},  {\rm ii)},  {\rm iii)}, {\rm iv)} and {\rm vi)}, while {\rm v)} will be proved in the next section.
\begin{proof}
The proof uses some ideas of Theorem 3.1 in \cite{gg4}.
\vskip0.2cm
{\em Proof of {\rm i)}.}
Let us define 
\beq\label{omega-e}
\Omega_\e=:\{\hbox{the connected component of}: u_\e(x_1,\dots,x_d)>0\hbox{ containing } 0\}.
\eeq
First we prove that $\Omega_\e$ is non empty and contains the line $(t,0,..,0)$ for $t\in (-a_n,a_n)$. 
For $\e$ small we have that, 
\[
\begin{split}
u_\e(t,0,..,0)=\frac 1{2(d-1)}+\e v_n(t,0)\geq \frac 1{2(d-1)}-c_1\e>0
\end{split}\]
where $-c_1=\inf\limits_{t\in (-a_n,a_n)}v_n(t,0)=\inf\limits_{t\in (-a_n,a_n)}f(t)<0$.

Next 
we prove that  $\Omega_\e\subset\left(-D\e^{-\frac 1{2n}},D\e^{-\frac 1{2n}}\right)\times B^{d-1}(0,\sqrt 2)$ when $\e$ is small enough. Let us compute $v_n(\pm D\e^{-\frac 1{2n}},x_2)$ for $|x_2|\leq \sqrt 2$. By \eqref{funct-vn}
\[v_n(\pm D\e^{-\frac 1{2n}},x_2)=-D^{2n}\e^{-1}+O(\e^{\frac{-n+1}n})
\]
so that, recalling that $D=\left(\frac 1{d-1}\right)^{\frac 1{2n}}$ and $n\geq 2$
\[
\begin{split}
u_\e(\pm D\e^{-\frac 3{4n}},x_2,..,x_d)&\leq \frac 1{2(d-1)}-D^{2n}+O(\e^{\frac1n})<0
\end{split}\]
if $\e$ is small enough. 
Moreover, by \eqref{funct-vn}, we have 
\[\sup_{(x_1,x_2)\in \R\times [-\sqrt 2,\sqrt 2] }v_n(x_1,x_2)=C>0\]
and hence if $(x_2,..,x_d)\in \partial B^{d-1}(0,\sqrt 2)$ and $x_1\in \left(-D\e^{-\frac1{2n}},D\e^{-\frac1{2n}}\right)$ then 
\[u_\e(x_1,x_2,..,x_d)\leq -\frac 1{2(d-1)}+C\e<0
\]
if $\e$ is small enough which proves the claim.

Reasoning exactly in the same way, using that $\Omega_\e\subset\left(-D\e^{-\frac 1{2n}},D\e^{-\frac1{2n}}\right)\times B^{d-1}(0,\sqrt 2)$, it can be easily proved that, for $x\in \Omega_\e$ 
\[0\leq u_\e(x_1,x_2,..,x_d)\leq  \frac 1{2(d-1)}+C\e\]
so that $\|u_\e\|_{C^0(\Omega_\e)}\leq C$ and, in a very similar manner, also $\|u_\e\|_{C^l(\Omega_\e)}\leq C$ for every $l$. This concludes the proof of {\rm i)}.

{\em Proof of {\rm ii)}.}
To prove the star-shapedness of $\Omega_\e$ with respect to the origin we will show that
\[x\cdot \nabla u_\e(x)\leq -\frac 1{2(d-1)}<0\]
for every $x\in \partial \Omega_\e$. Note that this also proves the smoothness of $\Omega_\e$.\\
Since $u_\e=0$ on $\partial\Om_\e$ we have that, for $x\in \partial \Omega_\e$,
\beq\label{frontiera}
\frac 1{2(d-1)}+v_n(x_1,x_2)=\frac 1{2(d-1)}(x_2^2+...+x_d^2).
\eeq
Differentiating $u_\e$, we have
\beq\label{derivate-prime}
\frac {\partial u_\e}{\partial x_i}=
\begin{cases}
\e\frac {\partial v_n}{\partial x_1}&if\ i=1\\
-\frac 1{(d-1)}x_2+\e\frac {\partial v_n}{\partial x_2}&if\ i=2\\
-\frac 1{(d-1)}x_i&if\ i>2,
\end{cases}
\eeq
so that
\[
x\cdot \nabla u_\e(x)=\e\left[ \frac {\partial v_n}{\partial x_1}x_1+ \frac {\partial v_n}{\partial x_2}x_2\right]-\frac 1{(d-1)}\sum_{i=2}^d x_i^2
\]
and using \eqref{frontiera},
\[\begin{split}
x\cdot \nabla u_\e(x)&=-\frac 1{(d-1)}+\e\left[ \frac {\partial v_n}{\partial x_1}x_1+ \frac {\partial v_n}{\partial x_2}x_2-2v_n(x_1,x_2)\right].
\end{split}
\]

Since
\[v_n(x_1,x_2)=-\sum_{j=0}^{n}b_{j}P_{2j}(x_1,x_2)\]
with $b_{n}=1$, by Euler's theorem, for homogeneous function, we get
\[\begin{split}
&\frac {\partial v_n}{\partial x_1}x_1+ \frac {\partial v_n}{\partial x_2}x_2-2v_n(x_1,x_2)=-\sum_{j=0}^n(2j-2)b_{j}P_{2j}(x_1,x_2).
\end{split}
\]
Observe that
\[\lim_{|x_1|\to +\infty}-\sum_{j=0}^{n}b_{j}P_{2j}(x_1,x_2)=-\infty\]
which implies 
\[\sup _{(x_1,x_2)\in \R\times (-\sqrt 2,\sqrt 2)}-\sum_{j=0}^{n}(2j-2)b_{j}P_{2j}(x_1,x_2)=l<\infty\]
and finally 
\[\begin{split}
x\cdot \nabla u_\e(x)&\leq- \frac 1{(d-1)}+l\e\leq -\frac 1{2(d-1)}<0
\end{split}
\]
for $\e$ small enough.

{\em Proof of {\rm iii)}.}
Here we show that the set $\Omega_\e':=\{x\in \Omega_\e: u(x_1,\dots,x_d)>\frac 1{2(d-1)}\}$ has exactly  $n$ connected components.
By the definition of $f$ in \eqref{f} there exist $n$ maximum points $\bar s_1,..,\bar s_{n}$ for $f$ in $[-a_n,a_n]$ and  $n-1$ minimum points $s_1,..,s_{n-1}$ for $f$ in $[-a_n,a_n]$ such that
\[ -a_n<\bar s_1<s_1<\bar s_2<s_2<...<s_{n-1}<\bar s_{n}<a_n\]
and
\[\min\limits_{1\le j\le n}f(\bar s_j)=d'>0 \ , \ \max\limits_{1\le j\le n-1} f(s_j)=-d''<0.
\]
Moreover  we observe that 
\beq\label{stue}
\begin{split}
u_\e(\bar s_j,0,..,0)
&=\frac 1{2(d-1)}+\e v_n(\bar s_j,0)=\frac 1{2(d-1)}+\e f(\bar s_j)\\
&\geq \frac 1{2(d-1)}+\e d'>\frac 1{2(d-1)},\\
\end{split}
\eeq
which proves \eqref{stmax} with $C=d'$.

Next we want to prove that 
\beq\label{stima-components-1}
u(b,x_2,..,x_d)<\frac 1{2(d-1)}
\eeq
when either $b=s_j$ for $j=1,\dots,n-1$ or $b=\pm 2a_n$ and $(x_2,..,x_d)\in B^{d-1}(0,\sqrt 2)$.\\
To prove \eqref{stima-components-1} we argue by contradiction and we assume there exist a sequence $\e_k\to 0$ and points $(x_{2,k},..,x_{d,k})\in B^{d-1}(0,\sqrt 2)$
 such that 
\beq \label{stima-inv}
\begin{split}
u(b,x_{2,k},..,x_{d,k})&=\frac 1{2(d-1)}(1-x_{2,k}^2-..-x_{d,k}^2)+\e_k
v_n(b,x_{2,k})\geq \frac 1{2(d-1)}
\end{split}
\eeq
for $k\to \infty$, for a fixed value of $b$. Since $\e_k v_n(b,x_{2,k})\to 0$ as $k\to \infty$, formula \eqref{stima-inv} easily implies that $(x_{2,k},..,x_{d,k})\to 0$ as $k\to \infty$.\\

Next we observe that $v_n(b,0)=f(b)\leq -\beta:=\max\{-d'', f(\pm 2 a_n)\}<0$ so that $v_n(b,x_{2,k})\leq -\frac 12\beta$ for $k$ large enough.  Moreover, 
\beq \label{stima-inv-2}
\begin{split}
u(b,x_{2,k},..,x_{d,k})&=\frac 1{2(d-1)}(1-x_{2,k}^2-..-x_{d,k}^2)+\e_k v_n(b,x_{2,k})\\
&\leq \frac 1{2(d-1)}-\frac 12 \e_k\beta<\frac 1{2(d-1)}
\end{split}
\eeq
for $\e_k$ small enough. This contradicts \eqref{stima-inv} and proves \eqref{stima-components-1}.

Estimates \eqref{stue} and \eqref{stima-components-1} imply that the set $\{(x_1,..,x_d)\in\Om_\e: u_\e(x_1,..,x_d)>\frac 1{2(d-1)}\}$ has at least  $n$ connected components, the first one in the subset $\Omega_\e\cap [-2a_n, s_{1}]\times B^{d-1}(0,\sqrt 2)$, the last one in the subset $\Omega_\e\cap [s_{n-1},2a_n]\times B^{d-1}(0,\sqrt 2)$, and the others in the subsets $\Omega_\e\cap [s_j,s_{j+1}]\times B^{d-1}(0,\sqrt 2)$ for $j=1,..,n-2$.  In each of these components $u_\e$ admits at least one maximum point.

{\em Proof of {\rm iv)}:} we have to show that $u_\e$ has exactly $n$ maximum points, $n-1$ saddle points, and that they are all nondegenerate. Note $u_\e$ cannot have any minima by the maximum principle.

The critical points of $u_\e$ must satisfy
\beq\label{derivate-prime=0}
\begin{cases}
\frac {\partial v_n}{\partial x_1}=0&\\
-\frac 1{(d-1)}x_2+\e\frac {\partial v_n}{\partial x_2}=0&\\
-\frac 1{(d-1)}x_i=0&\hbox{for }i\ge3
\end{cases}
\eeq
so that $x_3=..=x_d=0$ at a critical point.

Moreover the first equation in \eqref{derivate-prime=0} implies that if $P_\e=(x_1^\e,x_2^\e,0,..,0)$ is a critical points of $u_\e$ in $\Omega_\e$, then $(x_1^\e,x_2^\e)$ is a critical point of $v_n$ contained in $(-D\e^{-\frac 1{2n}}, D\e^{-\frac 1{2n}})\times (-\sqrt 2, \sqrt 2)$. Assume, by contradiction, that $|x_1^\e|\to +\infty$ as $\e \to 0$. Then $\frac {\partial v_n}{\partial x_1}(x_1^\e,x_2^\e)\to \pm \infty$ as $\e \to 0$, since $\frac {\partial v_n}{\partial x_1}(x_1,x_2)\sim -2n x_1^{2n-1}$ as $|x_1|\to \infty$.  This is not possibile since we are assuming $\frac {\partial v_n}{\partial x_1}(x_1^\e,x_2^\e)=0$. This proves that $|P_\e|\leq C$ for a suitable constant $C$.

Now we  consider a maximum point  $P_\e^j=(x_1^\e,x_2^\e,0,..0)\in [s_j,s_{j+1}]\times B^{d-1}(0,\sqrt 2)$.  The case when either $P_\e\in [-2a_n,s_{1}]\times B^{d-1}(0,\sqrt 2)$ or  $P_\e\in [s_{n-1},2a_n]\times B^{d-1}(0,\sqrt 2)$ follows exactly in the same way. 
The second equation in \eqref{derivate-prime=0} gives
\beq
\label{comp-sec}
\frac 1{(d-1)}x_2^\e=\e\frac {\partial v_n}{\partial x_2}(x_1^\e,x_2^\e)= O(\e)
\eeq
and so $x_2^\e=O(\e)$ as $\e\to 0$. 

Inserting in the first equation of \eqref{derivate-prime=0} we obtain 
\beq\label{comp-prima}
0=\frac {\partial v_n}{\partial x_1}(x_1^\e,x_2^\e)= \frac {\partial v_n}{\partial x_1}(x_1^\e,0)+O(x_2^\e)=f'(x_1^\e)+O(\e)\eeq
and this implies that $x_1^\e\to\bar x_j$ as $\e\to 0$ and $\bar x_j$ must satisfy  $f'(\bar x_j)=0$. 

Since $P_\e^j$ is a maximum point and $\bar x_j\in [s_j,s_{j+1}]$ then $\bar x_j=\bar s_j$ and $x_1^\e\to \bar s_j$. 
The Hessian matrix of $u_\e$  in $P_\e^j$ is 
\beq\label{hessian}
H_{u_\e}=
\begin{pmatrix}
A_\e & 0\\
0&B
\end{pmatrix}\eeq
where $B=-\frac 1{d-1}I$ and $I$ is the identity matrix,    while
\[A_\e=
\begin{pmatrix}
\e \frac {\partial^2 v_n}{\partial x_1^2}(x_1^\e,x_2^\e)& \e \frac {\partial^2 v_n}{\partial x_1\partial x_2}(x_1^\e,x_2^\e)\\
\e\frac {\partial^2 v_n}{\partial x_1\partial x_2}(x_1^\e,x_2^\e)&-\frac 1{(d-1)}+\e\frac {\partial^2 v_n}{\partial x_2^2}(x_1^\e,x_2^\e)
\end{pmatrix}\]

Let us observe that 
\[
\begin{split}
\frac {\partial^2 v_n}{\partial x_1^2}(x_1^\e,x_2^\e)&=\frac {\partial^2 v_n}{\partial x_1^2}(x_1^\e,0)+O(x_2^\e)=\frac {\partial^2 v_n}{\partial x_1^2}(\bar s_j,0)+O(|x_1^\e-\bar s_j|)+O(\e)\\
&=f''(\bar s_j)+O(|x_1^\e-\bar s_j|)+O(\e)=f''(\bar s_j)+o(1)
\end{split}
\]
where $f''(\bar s_j)<0$, and also that $\frac {\partial^2 v_n}{\partial x_i\partial x_j}(x_1^\e,x_2^\e)=O(1)
$. Concluding we have 
\[\begin{split}
&{\rm det} A_\e=-\frac 1{(d-1)}\e f''(\bar s_j)(1+o(1)>0\quad\hbox{and }{\rm tr} A_\e<0
\end{split}\]
for $\e$ small enough. So the Hessian matrix $H_{u_\e}$ is negative definite at $P_\e^j$ which is a nondegenerate maximum point for $u_\e$.
This implies also that $u_\e$ has a unique maximum point in $\Omega_\e\cap [s_j,s_{j+1}]\times B^{d-1}(0,\sqrt 2)$ for $j=1,..,n-2$ and also in 
$\Omega_\e\cap [-2a_n,s_{1}]\times B^{d-1}(0,\sqrt 2)$ or  $\Omega_\e\cap [s_{n-1},2a_n]\times B^{d-1}(0,\sqrt 2)$.

Next we let $P_\e=(x_1^\e,x_2^\e,0,..0)$ be another critical point for $u_\e$ in $\Omega_\e$ different from the previous maximum points $P_\e^j$.  Then $|P_\e|<C$. We can then reason exactly as above and get that $x_2^\e\to 0$ as $\e\to 0$ and that $x_1^\e\to \tilde x_1\in [-C,C]$ where $\tilde x_1$  satisfies $f'(\tilde x_1)=0$.  By the properties of the function $f$ then either $\tilde x_1=\bar s_j$ for $j=1,..,n$ or $\tilde x_1=s_j$ for $j=1,..,n-1$.  But $\tilde x_1=\bar s_j$ is not possibile since $f''(\bar s_j)< 0$ and $\bar s_j$ is the limit of the maximum point $P_\e^j$.  Then $\tilde x_1=s_j$ and $f''(s_j)>0$. 

Computing the Hessian matrix of $u_\e$ at the point $P_\e$ and reasoning as above we have that the submatrix $B$ is negative definite while the submatrix $A_\e$ satisfies 
${\rm det} A_\e=-\frac 1{(d-1)}\e f''(s_j)(1+o(1))<0$
for $\e$ small and this implies that for $\e$ small the Hessian matrix $H_{u_\e}$ in $P_\e$ has one positive eigenvalue.  This  shows in turn that $P_\e$ is a nondegenerate saddle point for $u_\e$. 
Then $u_\e$ in $\Omega_\e$ has exactly $n$ nondegenerate maximum points and can have only nondegenerate saddle points.  The nondegeneracy of all the critical points of $u_\e$ in $\Omega_\e$ implies that they are isolated so that we can apply the Poincaré-Hopf Theorem to the vector field $\nabla u_\e$ in $\Omega_\e$ (observe that $\nabla u_\e\cdot \nu<0$ on $\partial \Omega_\e$) and we get that
\[ \sum_{P_\e} {\rm index}_{P_\e}\left(\nabla u_\e\right)=(-1)^d \chi(\Omega_\e)\]
where $\chi(\Omega_\e)=1$ is the Euler characteristic of $\Omega_\e$. This means that the saddle points of $u_\e$ are exactly $n-1$ and concludes the proof of {\rm iv)}. 

{\em Proof of {\rm vi)}.} 
The property {\rm vi)} easily follows by the definition of $u_\e$ since if $x\in Q$ and $Q$ is compact then $\e v_n(x_1,x_2)=O(\e)$ and $u_\e\to \frac 1{2(d-1)} (1-x_2^2-\dots-x_d^2)$. 
\end{proof}
\section{The ``almost convexity'' property of $\Omega_\e$}\label{s3}
Here we prove {\rm v)} of Theorem \ref{T2}.
 We want to show that the set of points $\Sta (\Omega_\e)$ with respect to which $\Omega_\e$ is starshaped  (see definition \ref{star-D}) differs from $\Omega_\e$ by a set whose measure converges to zero as $\e\to 0$.

For this, let us consider the set 
\begin{equation}\label{d-epsilon}
D_\e=\left\{\xi=(\xi_1,\xi_2,..,\xi_d)\in\Omega_\e \Big| u_\e(\xi)>A_0\e^\frac1n\right\}
\end{equation}
for a suitable value of $A_0$.

\begin{proposition} \label{lem1}
There exits a positive constant $\bar A_0 =\bar A_0(n,D,a_i)$ where $D$ is the constant appearing in {\rm i)} of Theorem \ref{T2} and $a_i$ are as in \eqref{points-a} such that for any $A_0>\bar A_0$ we have
\begin{equation}\label{g}
D_\e\subset \Sta (\Omega_\e).
\end{equation}
\end{proposition}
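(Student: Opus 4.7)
\emph{Approach.} Since $\Omega_\e$ is smooth, bounded and connected with $u_\e>0$ inside and $u_\e=0$ on the boundary, the outward unit normal to $\partial\Omega_\e$ is $\nu=-\nabla u_\e/|\nabla u_\e|$, and star\-shapedness of $\Omega_\e$ with respect to an interior point $\xi$ is equivalent to
\[(\xi-x)\cdot\nabla u_\e(x)>0\quad\text{for every }x\in\partial\Omega_\e.\]
Starting from the explicit derivatives \eqref{derivate-prime}, setting $x_\perp:=(x_2,\dots,x_d)$ and using the boundary relation $|x_\perp|^2=1+2(d-1)\e v_n(x_1,x_2)$ (which is $u_\e(x)=0$ rearranged), the analogous interior expression for $|\xi_\perp|^2$ in terms of $u_\e(\xi)$, and the polarization identity $|x_\perp|^2-\xi_\perp\cdot x_\perp=\tfrac12\bigl(|x_\perp|^2-|\xi_\perp|^2+|x_\perp-\xi_\perp|^2\bigr)$, a direct manipulation gives the clean identity
\[(\xi-x)\cdot\nabla u_\e(x)=u_\e(\xi)+\frac{|x_\perp-\xi_\perp|^2}{2(d-1)}-\e\,R(x,\xi),\]
where
\[R(x,\xi):=v_n(\xi_1,\xi_2)-v_n(x_1,x_2)-(\xi_1-x_1)\partial_{x_1}v_n(x_1,x_2)-(\xi_2-x_2)\partial_{x_2}v_n(x_1,x_2)\]
is the second-order Taylor remainder of $v_n$ based at $(x_1,x_2)$. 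Because the middle term is nonnegative and $u_\e(\xi)>A_0\e^{1/n}$ in $D_\e$, the proposition reduces to proving a uniform bound $\e\,R(x,\xi)\le C^*\e^{1/n}$ with $C^*=C^*(n,D,a_i)$ independent of $\e$; then $\bar A_0:=C^*+1$ works.

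\emph{Bounding $R$.} I would write the remainder in integral form
\[R(x,\xi)=\int_0^1(1-t)\,w^{\mathsf T}\,\mathrm{Hess}\,v_n(x_{12}+tw)\,w\,dt\qquad(w:=\xi_{12}-x_{12})\]
and exploit the harmonicity of $v_n$: $\mathrm{tr}\,\mathrm{Hess}\,v_n=0$ forces the eigenvalues to be $\pm\mu$. From \eqref{funct-vn} one reads off $\partial_{x_1x_1}v_n(y_1,y_2)=-2n(2n-1)y_1^{2n-2}+O(|y_1|^{2n-4})$ and $\partial_{x_1x_2}v_n(y_1,y_2)=O(|y_1|^{2n-3})$ for $|y_1|\ge M$ and $|y_2|\le\sqrt 2$. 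Writing $w^{\mathsf T}\mathrm{Hess}\,v_n\,w=a(w_1^2-w_2^2)+2b\,w_1w_2$ with $a<0$ for $|y_1|\ge M$ and absorbing the cross term via $2|bw_1w_2|\le|a|w_1^2+(b^2/|a|)w_2^2$ gives
\[w^{\mathsf T}\,\mathrm{Hess}\,v_n(y_1,y_2)\,w\le\bigl(|a|+b^2/|a|\bigr)w_2^2\le C_1|y_1|^{2n-2}w_2^2.\]
Since $|y_1|\le D\e^{-1/(2n)}$ on $\Omega_\e$ and $|w_2|\le|x_2|+|\xi_2|\le 2\sqrt 2$ is bounded, the tail contribution is $\e\,R\le C^*\e^{1/n}$. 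On the complementary region $|y_1|\le M$ the Hessian is bounded by a constant $C(M)$ and $|w|^2\le 4D^2\e^{-1/n}+8$, so the contribution is $\le C'\e^{1-1/n}$, which is still $\le C^*\e^{1/n}$ because $1-1/n\ge 1/n$ for every $n\ge 2$.

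\emph{Main obstacle.} Everything rests on the sign structure of $\mathrm{Hess}\,v_n$: at the critical length scale $|y_1|\sim\e^{-1/(2n)}$, the Hessian entries are of order $\e^{-(n-1)/n}$ and $|w|^2$ may reach $\e^{-1/n}$, so a blind operator-norm bound $\|\mathrm{Hess}\,v_n\|\,|w|^2$ yields the useless estimate $\e\,R=O(1)$, catastrophically larger than the reserve $A_0\e^{1/n}$. The only way out is to exploit that the \emph{positive} part of the indefinite quadratic form $w^{\mathsf T}\mathrm{Hess}\,v_n\,w$ is, to leading order, carried by the bounded transverse component $w_2$, so that the potentially huge factor $|w_1|\sim\e^{-1/(2n)}$ never multiplies a positive coefficient. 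Making this cancellation quantitative and uniform for every $(x,\xi)\in\partial\Omega_\e\times D_\e$ and every $\e\in(0,\e_0)$, while patching cleanly through the intermediate regime where neither the harmonicity-based bound nor the compact-set bound dominates by itself, is the technical heart of the proof.
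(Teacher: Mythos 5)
Your proof is correct, and your reduction is in fact the same as the paper's: after the boundary relation $u_\e(x)=0$ and the polarization identity for the transverse variables, the paper's estimates on $I_1+I_2$ amount exactly to your bound $\e R(x,\xi)=O(\e^{1/n})$, where $R$ is the second-order Taylor remainder of $v_n$. Where you genuinely diverge is in \emph{how} $R$ is controlled. The paper never mentions the Hessian: it expands $\partial_1 v_n=-2nx_1^{2n-1}+O(|x_1|^{2n-3})$, applies Young's inequality $2n\xi_1x_1^{2n-1}\le\xi_1^{2n}+(2n-1)x_1^{2n}$ to convert the cross term into a difference $-x_1^{2n}+\xi_1^{2n}=v_n(x_1,x_2)-v_n(\xi_1,\xi_2)+O(\e^{-(n-1)/n})$, and bounds $I_2$ directly by $O(\e|x_1|^{2n-2})$. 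You instead write $R$ in integral form, use harmonicity of $v_n$ to make $\mathrm{Hess}\,v_n$ traceless with eigenvalues $\pm\mu$, and exploit the sign $\partial_{11}v_n<0$ at large $|y_1|$ so that the quadratic form is bounded above by a multiple of $w_2^2$ alone, the bounded transverse component, before patching in the compact region. Your harmonicity-based bound is more structural and makes visible the cancellation that the paper encodes in its choice of Young exponents; it also produces the clean exact identity $(\xi-x)\cdot\nabla u_\e(x)=u_\e(\xi)+\tfrac{|x_\perp-\xi_\perp|^2}{2(d-1)}-\e R(x,\xi)$, whereas the paper works with one-sided inequalities from the start. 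One remark: the ``main obstacle'' you flag at the end is already resolved by the argument you sketched -- the two-region split (compact $|y_1|\le M$ and tail $|y_1|>M$) is a clean dichotomy with no problematic intermediate regime, since both contributions are individually $O(\e^{1/n})$ for $n\ge 2$.
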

\begin{proof}
Recall that
$u_\e(x_1,..,x_d)=\frac 1{2(d-1)} (1-x_2^2-...-x_d^2)+\e v_n(x_1,x_2)$
and if $x\in \partial \Omega_\e$ then
\beq\label{gfrontiera}
\frac 1{2(d-1)}+\e v_n(x_1,x_2)=\frac 1{2(d-1)}(x_2^2+...+x_n^2)
\eeq
We want to show that $(x-\xi)\cdot \nabla u_\e(x)<0$ for every $\xi\in D_\e$ and for every $x\in \partial \Omega_\e$. Since $\nabla u_\e(x)$ is proportional to the unit normal at~$x\in\partial\Omega_\e$, this will prove that $D_\e\subset \Sta (\Omega_\e)$.

From now we will often write
 $$h(x,\xi)=O\big(g(\e)\big)$$
when $|h(x,\e)|\le Cg(\e)$
for some positive constant $C$ depending only $n,D,a_i$.
 Let us compute
\begin{equation}\label{g12}
\begin{split}
&(x-\xi)\cdot \nabla u_\e(x)=\underbrace{(x_1-\xi_1)\e\frac {\partial v_n}{\partial x_1}}_{:=I_1}+\underbrace{(x_2-\xi_2)\e\frac {\partial v_n}{\partial x_2}}_{:=I_2}
-\underbrace{\frac1{d-1}\sum_{i=2}^d(x_i-\xi_i)x_i}_{:=I_3},
\end{split}
\end{equation}
and estimate all the terms. We have that
\begin{equation}
\begin{split}
\frac{I_1}\e&=(x_1-\xi_1)\frac {\partial v_n}{\partial x_1}=(x_1-\xi_1)\left(-2nx_1^{2n-1}+O(|x_1|^{2n-3})\right)\\
&=-2nx_1^{2n}+2n\xi_1x_1^{2n-1}+O\big((|x_1|+|\xi_1|)|x_1|^{2n-3}\big)\\
&\quad\left(\hbox{using Young's inequality }ab\le\frac1{2n}a^{2n}+\frac{2n-1}{2n}b^\frac{2n}{2n-1}\right)\le\\
&\le -2nx_1^{2n}+\xi_1^{2n}+(2n-1)x_1^{2n}+O\big(|x_1|^{2n-2}+|\xi_1||x_1|^{2n-3}\big)\\
&=v_n(x_1,x_2)-v_n(\xi_1,\xi_2)-\underbrace{v_n(x_1,x_2)-x_1^{2n}}_{=O(x_1^{2n-2})}+
\underbrace{v_n(\xi_1,\xi_2)+\xi_1^{2n}}_{=O(\xi_1^{2n-2})}+O\big(\e^\frac{-n+1}n\big)\\[3mm]
&\Rightarrow I_1=\e\Big(v_n(x_1,x_2)-v_n(\xi_1,\xi_2)\Big)+O\big(\e^\frac1n\big).
\end{split}
\end{equation}
Moreover
\begin{equation}
I_2=(x_2-\xi_2)\e\frac {\partial v_n}{\partial x_2}=O\left(\e|x_1|^{2n-2}\right)=O(\e^\frac1n),
\end{equation}
and
\begin{equation}
\begin{split}
I_3&=-\frac1{d-1}\sum_{i=2}^d(x_i-\xi_i)x_i=-\frac1{d-1}\sum_{i=2}^dx_i^2+\frac1{d-1}\sum_{i=2}^d\xi_ix_i\\
&\le-\frac1{d-1}\sum_{i=2}^dx_i^2+\frac1{2(d-1)}\sum_{i=2}^d\xi_i^2+\frac1{2(d-1)}\sum_{i=2}^dx_i^2\\
&=-\frac1{2(d-1)}\sum_{i=2}^dx_i^2+\frac1{2(d-1)}\sum_{i=2}^d\xi_i^2\,.
\end{split}
\end{equation}
Hence \eqref{g12} becomes
\begin{equation}\label{g99}
\begin{split}
(x-&\xi)\cdot \nabla u_\e(x)=I_1+I_2+I_3\\
&\le-\frac1{2(d-1)}\sum_{i=2}^dx_i^2+\e v_n(x_1,x_2)+
\frac1{2(d-1)}\sum_{i=2}^d\xi_i^2-\e v_n(\xi_1,\xi_2)+O(\e^\frac1n)\\
&\qquad\left(\text{adding and subtracting  }\frac 1{2(d-1)}\right)\\
&=\underbrace{\frac1{2(d-1)}\left(1-\sum_{i=2}^dx_i^2\right)+\e v_n(x_1,x_2)}_{=0\text{ since }x\in\partial\Om_\e}-\underbrace{\frac 1{2(d-1)}\left(1-
\sum_{i=2}^d\xi_i^2\right)-\e v_n(\xi_1,\xi_2)}_{\le -A_0\e^\frac1n\text{ since }\xi\in D_\e}+O(\e^\frac1n)\\
&\le-A_0\e^\frac1n+O(\e^\frac1n)\le -A_0\e^\frac1n+\bar A_0\e^\frac1n.
\end{split}
\end{equation}
where $\bar A_0 =\bar A_0(n,D,a_i)$. Then we can choose in \eqref{g99} $A_0>\bar A_0$ such that $(x-\xi)\cdot \nabla u_\e(x)<0$ for $\e$ small enough. This ends the proof.
\end{proof}

For our future applications, an important observation is that $D_\e$ is stable with respect $C^2$-small deformation of the domain~$\Omega_\e$.
\begin{corollary}\label{C.perturbation}
	Let us fix $\e>0$ and $A_0>\bar A_0$ as in Proposition~\ref{lem1}, and consider the set $D_\e$ in \eqref{g}. Let $S$ be an open set $S\supset \overline\Omega_\e$, then there exists  $\delta_0=\delta_0(\e)>0$, such that $D_\e\subset \Sta(\Phi(\Om_\e))$ for any map $\Phi:S\to\R^d$ satisfying $\|\Phi-I\|_{C^2(S)}< \delta_0$.
\end{corollary}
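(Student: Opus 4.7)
The plan is to carry the strict inequality obtained in the proof of Proposition~\ref{lem1} over to the perturbed domain. With $\e>0$ and $A_0>\bar A_0$ now fixed, the chain~\eqref{g99} does not merely give a non-positive bound: since $\bar A_0$ is by construction a bound on the implicit constant in the $O(\e^{1/n})$ remainder, one obtains the uniform strict inequality
\[
(x-\xi)\cdot\nabla u_\e(x)\le -(A_0-\bar A_0)\e^{1/n}=:-\kappa_0<0
\qquad\text{for all }\xi\in D_\e,\ x\in\de\Om_\e.
\]
Because $u_\e=0$ on $\de\Om_\e$ while $u_\e>0$ inside, the outward unit normal is $\nu(x)=-\nabla u_\e(x)/|\nabla u_\e(x)|$, and the above rewrites as $(x-\xi)\cdot\nu(x)\ge\kappa_1>0$ with $\kappa_1:=\kappa_0/\max_{\de\Om_\e}|\nabla u_\e|$.

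Next I would observe that $\overline{D_\e}$ is compactly contained in $\Om_\e$: since $u_\e$ is continuous on $\overline{\Om_\e}$ and vanishes on $\de\Om_\e$, the upper level set $\{u_\e\ge A_0\e^{1/n}\}$ has positive distance from the boundary. Choosing $\delta_0$ smaller than this distance (and small enough to keep $\Phi$ a diffeomorphism of $S$ onto its image), each $\xi\in D_\e$ satisfies $\Phi^{-1}(\xi)\in\Om_\e$, so $D_\e\subset\Phi(\Om_\e)$, and $\de(\Phi(\Om_\e))=\Phi(\de\Om_\e)$ remains a $C^2$ hypersurface.

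The main step is the stability of the normal inequality under $C^1$-small deformations. Writing $y=\Phi(x)$ for $x\in\de\Om_\e$, the outward unit normal to $\Phi(\de\Om_\e)$ is a smooth function of $\Phi(x)$ and of the Jacobian $d\Phi_x$; in particular the assignment $\Phi\mapsto\nu_\Phi\circ\Phi$ is continuous from the $C^1$-topology into $C^0(\de\Om_\e;\R^d)$. By compactness of $\de\Om_\e\times\overline{D_\e}$, it follows that $(\Phi(x)-\xi)\cdot\nu_\Phi(\Phi(x))$ converges to $(x-\xi)\cdot\nu(x)$ uniformly in $(x,\xi)$ as $\|\Phi-I\|_{C^1}\to 0$. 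Taking $\delta_0$ small enough therefore yields
\[
(y-\xi)\cdot\nu_\Phi(y)\ge \kappa_1/2>0\qquad\forall\,\xi\in\overline{D_\e},\ y\in\de(\Phi(\Om_\e)),
\]
which is the classical criterion forcing the bounded smooth domain $\Phi(\Om_\e)$ to be star-shaped with respect to each such $\xi$. Hence $D_\e\subset\Sta(\Phi(\Om_\e))$.

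The essential analytic content lies already in Proposition~\ref{lem1}; what remains is largely bookkeeping, the one delicate point being the continuous dependence of the outward normal of $\Phi(\de\Om_\e)$ on $\Phi$ in the $C^1$-topology. The $C^2$ regularity required in the hypothesis is used only to guarantee that $\Phi(\de\Om_\e)$ is a $C^2$ hypersurface with a classical outward normal; at the level of the estimates, only $C^1$-closeness is actually invoked.
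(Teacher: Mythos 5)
Your proof is correct and follows essentially the same route as the paper's own argument: from Proposition~\ref{lem1} one extracts the uniform strict lower bound $(x-\xi)\cdot\nu_\e(x)\ge c_0>0$ on $\partial\Om_\e$, uniformly over $\xi\in D_\e$, and then propagates it to $\partial\Phi(\Om_\e)$ by continuity of the outward normal under small deformations, using compactness of $\partial\Om_\e\times\overline{D_\e}$. Your extra bookkeeping (compact containment of $D_\e$ in $\Om_\e$ so that $D_\e\subset\Phi(\Om_\e)$, and the observation that $C^1$-closeness already suffices for the estimate while $C^2$ is only used for boundary regularity) is accurate and makes explicit what the paper leaves implicit in the phrase ``by continuity.''
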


\begin{proof}
	The proof of Proposition~\ref{lem1} shows that, for any $\xi\in D_\e$ and any $x\in\partial\Om_\e$, $(x-\xi)\cdot \nu_\e(x)\geq c_0>0$, where $\nu_\e(x)$ is the unit normal of~$\partial\Om_\e$. By continuity, this means that
	\[
	(x_*-\xi)\cdot \nu_*(x_*)\ge c_0/2>0
	\]
	for all $x_*\in\partial \Om_*:=\partial \Phi(\Om_\e)$, provided that $\Phi$ is sufficiently close to the identity in the $C^2$-norm. Here $\nu_*$ denotes the unit normal of~$\partial\Om_*$.
\end{proof}
Our final result is to show that the measure of $D_\e$ is close to that of $\Om_\e$.
\begin{lemma} \label{lem2}
Let $\bar A_0 =\bar A_0(n,D,a_i)$ be as in Proposition \ref{lem1} and let  $A_0>\bar A_0$ in $D_\e$ in \eqref{d-epsilon}.
Then
\begin{equation}\label{finale}
\frac{\mea (\Om_\e\setminus D_\e)}{\mea(\Om_\e)}\to0\quad\hbox{as }\e\to0.
\end{equation}
\end{lemma}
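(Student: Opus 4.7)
The strategy is to prove the two complementary bounds $\mea(\Om_\e)\gtrsim \e^{-1/(2n)}$ and $\mea(\Om_\e\setminus D_\e)\lesssim \e^{1/(2n)}$, from which \eqref{finale} follows at the explicit rate $O(\e^{1/n})$.

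For the denominator, the central cylinder
\[
\big\{(x_1,\dots,x_d):\ |x_1|\leq \tfrac12 D\e^{-1/(2n)},\ x_2^2+\dots+x_d^2 \leq r_0^2\big\}
\]
is contained in $\Om_\e$ for a suitably small $r_0>0$ independent of~$\e$. Indeed, on this set one checks from the explicit form of $v_n$ that $|\e v_n(x_1,x_2)|$ is controlled by the leading term $\e|x_1|^{2n}\leq 2^{-2n}/(d-1)$, so that $u_\e>0$ there. Its Lebesgue measure is $\geq c_0\, \e^{-1/(2n)}$.

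For the numerator, introduce
\[
g(x_1,x_2):= \tfrac{1}{2(d-1)}(1-x_2^2)+\e v_n(x_1,x_2),
\]
so that $u_\e(x_1,\dots,x_d)=g(x_1,x_2)-\tfrac{1}{2(d-1)}(x_3^2+\dots+x_d^2)$. For each $(x_1,x_2)$ in the projection $P$ of~$\Om_\e$ onto the $(x_1,x_2)$-plane, the slice of $\Om_\e$ (respectively of $D_\e$) in the variables $(x_3,\dots,x_d)$ is the $(d-2)$-ball of squared radius $2(d-1)g_+$ (respectively $2(d-1)(g-A_0\e^{1/n})_+$). Integrating in $(x_3,\dots,x_d)$ and using the elementary inequality $g^{(d-2)/2}-(g-A)_+^{(d-2)/2}\lesssim \min(A^{(d-2)/2},\ A\,g^{(d-4)/2})$ (or, for $d=2$, the pointwise bound by the indicator of $\{0<g\leq A_0\e^{1/n}\}$), one reduces to estimating the two-dimensional integrals $\mathcal L^2(\{0<g\leq 2A_0\e^{1/n}\})$ and $A_0\e^{1/n}\int_{\{g>2A_0\e^{1/n}\}\cap P} g^{(d-4)/2}\,dx_1dx_2$.

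The key input is the level-set bound
\[
\mathcal L^2\big(\{0<g<t\}\big)\leq C\,t\,\e^{-1/(2n)}\qquad\text{for all small }t>0,
\]
and proving it near the two ``tips'' of~$P$ at $|x_1|\simeq D'\e^{-1/(2n)}$, where $D':=2^{-1/(2n)}D$ and both $|\partial_{x_1}g|$ and the admissible $x_2$-width of~$P$ simultaneously degenerate to order $\e^{1/(2n)}$, is the main obstacle. One resolves the tip by parametrizing $\xi=D'\e^{-1/(2n)}-|x_1|$, Taylor-expanding to get $g(x_1,0)\simeq c\, \e^{1/(2n)}\xi$, and splitting the $\xi$-integral at $\xi_t\simeq t\e^{-1/(2n)}$, where the $x_2$-slice of $\{0<g<t\}$ switches from ``full'' (width $\sim\sqrt{\xi\e^{1/(2n)}}$) to ``annular'' (width $\sim t/\sqrt{\xi\e^{1/(2n)}}$); each piece contributes $\lesssim t\e^{-1/(2n)}$. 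Plugging this back into the Fubini reduction yields $\mea(\Om_\e\setminus D_\e)\lesssim \e^{1/(2n)}$ and completes the proof.
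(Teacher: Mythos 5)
Your argument is correct, and it takes a genuinely different route from the paper's. The paper rescales $y_1=(2(d-1))^{1/(2n)}\e^{1/(2n)}x_1$ (and $y_i=x_i$ for $i\ge2$), which flattens $\partial\Om_\e$ onto an approximate copy of $\{y_1^{2n}+\sum_{i\ge2}y_i^2=1\}$, then substitutes $z_1=y_1^n$ and passes to spherical coordinates; this identifies $\Om_\e\setminus D_\e$ (up to the $\e^{-1/(2n)}$ Jacobian of the first rescaling) with a thin spherical shell of radial width $O(\e^{1/n})$, with the tip degeneracy absorbed into the integrable Jacobian singularity $z_1^{(1-n)/n}$ and an $O(1)$ angular integral. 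You instead slice in $(x_3,\dots,x_d)$ by Fubini to reduce to the two-dimensional level-set bound $\mathcal L^2(\{0<g<t\})\lesssim t\,\e^{-1/(2n)}$, and prove that bound by hand: a standard coarea estimate away from the tips (where $|\partial_{x_2}g|=|x_2|/(d-1)$ is bounded below) combined with the Taylor expansion $g(x_1,0)\simeq c\,\e^{1/(2n)}\xi$ and the split of the $\xi$-integral at $\xi_t\simeq t\e^{-1/(2n)}$ near them, both pieces contributing $\lesssim t\e^{-1/(2n)}$. Both routes yield the same numerator bound $\mea(\Om_\e\setminus D_\e)=O(\e^{1/(2n)})$. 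What differs is where the work goes: the paper's double change of variables dispatches the tip degeneracy in one stroke, while your slicing makes the thin-shell geometry and the role of the tips explicit; you also record the explicit lower bound $\mea(\Om_\e)\gtrsim\e^{-1/(2n)}$ from a central cylinder and hence a rate $O(\e^{1/n})$, whereas the paper tacitly uses only that $\mea(\Om_\e)$ stays bounded away from zero when it concludes ``the result follows easily from this.''
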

\begin{proof}
By the definition of $D_\e$ we have that 
\begin{equation}
\begin{split}
\Om_\e\setminus D_\e&=\{(x_1,..,x_d)\in \Omega_\e
:0\le u_\e(x)\le A_0\e^\frac1n\}.
\end{split}
\end{equation}
With the change of variablse $y_1:\big(2(d-1)\big)^\frac1{2n}\e^\frac1{2n}x_1,$ $ y_i:=x_i \hbox{ for }i\ge2$, we get
\begin{equation}\label{g9}
\mea (\Om_\e\setminus D_\e)=\int_{\Om_\e\setminus D_\e}
dx=
\big(2(d-1)\big)^{-\frac 1{2n}}
{\e^{-\frac1{2n}}}\int_{K_\e}dy.
\end{equation}
Here, by {\rm i)} of Theorem \ref{T2}, and letting $C':=\big(2(d-1)\big)^{-\frac 1{2n}}D$ (where $D$ is the constant in Theorem \ref{T2}), we have set
\[
\begin{split}
K_\e:=&\left\{y\in(-C',C')\times B^{d-1}(0,\sqrt 2) :0\le u_\e\left(\big(2(d-1)\big)^{-\frac1{2n}}\e^{-\frac1{2n}}y_1, y_2,..,y_d\right)\le  A_0\e^\frac1n\right\}=\\
&\! \!\!\!\!=\left\{y\in (-C',C')\times B^{d-1}(0,\sqrt 2):0\le\frac 1{2(d-1)}\left(1-\sum_{i=2}^dy_i^2\right)-\frac 1{2(d-1)}
y_1^{2n}+O(\e^\frac1{n})\le  A_0\e^\frac1n\right\}\\
&\subseteq\underbrace{\left\{y\in (-C',C')\times B^{d-1}(0,\sqrt 2):-B\e^\frac1n\le1-\sum_{i=2}^dy_i^2-
y_1^{2n}\le  A\e^\frac1n\right\}}_{=H_\e}
\end{split}
\]
for some suitable constants $A,B>0$. Thus we have $\mea (K_\e)\leq \mea (H_\e)$.

By the symmetry of $H_\e$ we have 
\[\int_{H_\e}dy=2\int_{H_\e^+}dy\]
where 
\[
H_\e^+=\left\{y\in [0,C')\times B^{d-1}(0,\sqrt 2):1-A\e^\frac1{n}\le \sum_{i=2}^dy_i^2+
y_1^{2n}\le  1+B\e^\frac1{n}\right\}.\]
With the change of variables $z_1:=y_1^n$, $z_i:=y_i$ for $i\geq 2$, we get
\[
\int_{H_\e^+}dy=\frac 1n \int_{Z_\e^+}z_1^{\frac {1-n}n} dz
\]
where 
$Z_\e^+=\{z\in[0,(C')^n)\times B^{d-1}(0,\sqrt 2)
:1-A\e^\frac1n\le \sum_{i=1}^d z_i^{2}\le1+B \e^\frac1n
\}$.

In spherical coordinates we have $z_1=\rho \cos\theta_1$ and 
\beq 
\int_{Z_\e^+}z_1^{\frac {1-n}n} dz=\int_{\esf^{d-1}_+}(\cos\theta_1)^{\frac {1-n}n}w(\theta)d\theta\int_{\sqrt{1-A\e^\frac 1n}}^{\sqrt{1+B\e^\frac1n}} \rho^{d-2+\frac1n}
d \rho 
\eeq
where $\esf^{d-1}_+:=\{(\theta_1,..\theta_{d-1}): \theta_1\in [0,\frac \pi 2) , \theta_i\in [0,2\pi)\}$ and $w(\theta)$ is the surface element, which satisfies $|w(\theta)|\leq C$.  Since 
$$\int_{\esf^{d-1}_+}(\cos\theta_1)^{\frac 1n-1}w(\theta)d\theta\leq E$$
we have 
\beq \begin{split}
&\int_{Z_\e^+}z_1^{\frac {1-n}n} dz\leq E\int_{\sqrt{1-A\e^\frac1n}}^{\sqrt{1+B\e^\frac1n}} \rho^{d-2+\frac{1}n} d \rho \\
& 
=E'\left
[\left(1+B\e^\frac1n\right)^{\frac{d-1+\frac{1}n}2}-\left(1-A\e^\frac1n\right)^{\frac{d-1+\frac{1}n}2}\right]=O(\e^\frac1n)
\end{split}
\eeq
for suitable constants $E,E'$. Together with \eqref{g9}, this gives 
\begin{equation}
\mea (\Om_\e\setminus D_\e)\leq 2
\big(2(d-1)\big)^{-\frac 1{2n}}
{\e^{-\frac1{2n}}\int_{H_\e^+}dy\leq C \e^{\frac1{2n}}}=o(1)
\end{equation}
as $\e\to 0$. The result follows easily from this.
\end{proof}

\begin{proof}[Proof of {\rm v)} in Theorem \ref{T2}.]
The property {\rm v)} in Theorem \ref{T2} follows by Lemma \ref{lem1} and Lemma \ref{lem2}.
\end{proof}
\section{The exponential map}\label{s4}
From now we fix $\e$  such that Theorem \ref{T2} holds. 
Take a point $p\in\mathcal M$ and  let $\exp_p:V\to U$ be the exponential map  at $p$, where $V$ is a neighborhood of the origin in $T_p\mathcal M$ and $U$ is a neighborhood of $p$ in  $\mathcal M$.
 Let 
\beq\label{Phi}
\Phi_\eta(x):=\exp_p(\eta x)
\eeq
for $\eta\neq 0$ and $x\in\frac V\eta\subset T_p\mathcal M$.

We want to embed the domain $\Omega_\e$ of Theorem \ref{T2} into $\mathcal M$ using the map $\Phi_\eta(x)$. 
Corresponding to $\e$ there exists a $\delta_\e>0$, which depends on the manifold $\mathcal{M}$ and on the point $p\in \mathcal M$, such that for every $0<|\eta|<\delta_\e$ we have that
$$\eta x\in V\quad\hbox{for any }x\in\Om_\e.$$
In this way,  for every $0<|\eta|<\delta_\e$,   $\Phi_\eta$ is a diffeomorphism from $\Omega_\e$ into its image that  we will denote  
\beq\label{omt}
\widetilde \Omega_\eta=\Phi_\eta(\Omega_\e)=\exp_p(\eta\Omega_\e).
\eeq

In these coordinates (``rescaled normal coordinates''), the metric can be written as
\beq\label{metric}
g_{jk}=\eta^2 \left(\delta_{jk}+h_{jk}(\eta x)\right)
\eeq
where $h_{jk}$ are smooth functions, because the metric  is smooth. It is standard that 
\begin{align}
|h_{jk}(\eta x)|&\le C\eta^2|x|^2,\\
\label{metric2}
|\nabla_x h_{jk}(\eta x)|&\le C\eta^2|x|
\end{align}
for some positive constant $C$ independent of $\eta$ and $x\in\Om_\e$.

Recall that, in local coordinates, the Laplacian reads as 
\beq
\Delta_g=\frac 1{\sqrt {|g|}}\left(\frac{\partial}{\partial x_j} \sqrt {|g|} g^{jk}\frac{\partial}{\partial x_k}\right)=
g^{jk}\frac{\partial^2}{\partial x_j\partial x_k}+\frac1{\sqrt{g}}
\frac{\partial}{\partial x_j}\left(\sqrt{g}g^{jk}\right)\frac\partial{\partial x_k}\,.
\eeq
Since in our rescaled coordinates
\begin{gather} \label{metric-inv}
|g|=|\det (g_{ij})|=\eta^{2d}\left(1+O(\eta^2|x|^2)\right)\ ,\qquad  \ g^{jk}=\frac1{\eta^2}\left(\delta_{jk}+O(\eta^2|x|^2)\right)\,,\\
\sqrt{g}g^{jk}=\eta^{d-2}\left(\delta_{jk}+O(\eta^2|x|^2)\right),
\end{gather}
one obtains
\beq \label{lapl-belt}
\begin{split}
&\Delta_g=\frac1{\eta^2}\left(\delta_{jk}+O(\eta^2|x|^2)\right)\frac{\partial^2}{\partial x_j\partial x_k}+\frac1{\eta^2}O(\eta^2|x|)\frac\partial{\partial x_k}=\frac 1{\eta^2}\left(\Delta_{\R^d}-\eta^2 T_\eta \right)\,.
\end{split}
\eeq
Here $\Delta_{\R^d}:=\frac{\partial^2}{\partial x_1^2} +\dots+\frac{\partial^2}{\partial x_d^2}$ and $T_\eta$ is an operator of the form
\beq\label{stB}
T_\eta=A_{jk}(\eta,x)\frac{\partial^2}{\partial x_j\partial x_k}+A_k(\eta,x)\frac{\partial}{\partial x_k}
\eeq
with
\beq\label{stA}
A_{jk}(\eta,x)= \frac1{\eta^2}\delta_{jk}-g^{jk}=O(|x|^2)\quad\hbox{and }A_k(\eta,x)=O(|x|)\quad\hbox{for }x\in\Om_\e.
\eeq

Now we can state the main result of this section
\begin{proposition}\label{prog}
Assume $\tilde f(q,u)$, $\frac{\partial \tilde f}{\partial u}(q,u)$ are Lipschitz in a neighborhood of  of $(p,0)$ 
and $\tilde f(p,0)>0$. For every $\e$ fixed, that verifies $0<\e<\e_0$, there exists $\eta_{0,\e}>0$ such that, for every $0<\eta<\eta_{0,\e}$ the problem
\begin{equation}\label{general-f}
\begin{cases}
                        -\Delta_g \tilde u= \tilde f(q,\tilde u)&
            \mbox{  in }\widetilde\Omega_\eta\\
\tilde u=0  & \mbox{  on }\partial\widetilde\Omega_\eta
\end{cases}
\end{equation}
admits a continuous branch of solutions $\tilde u_\eta$ such that
\beq\label{sstim}
\left\|\frac{\tilde u_\eta\big(\exp_p(\eta \cdot)\big)}{\eta^2}-u_\e
\right\|_{C^{2,\alpha}( \overline\Omega_\e)}\to0\quad\hbox{as }\eta\to0,
\eeq
where $u_\e$ is the solution of Theorem \ref{T2}. 
\end{proposition}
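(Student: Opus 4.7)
The plan is to pull the equation back via $\Phi_\eta$ to the fixed Euclidean domain $\Omega_\e$ and apply the implicit function theorem around the torsion solution $u_\e$ from Theorem \ref{T2}. Setting $v_\eta(x):=\eta^{-2}\tilde u_\eta(\exp_p(\eta x))$ on $\Omega_\e$ and using \eqref{lapl-belt}, the Dirichlet problem \eqref{general-f} becomes
\begin{equation*}
-\Delta_{\R^d}v_\eta+\eta^2 T_\eta v_\eta=\tilde f\big(\exp_p(\eta x),\eta^2 v_\eta\big)\quad\text{in }\Omega_\e,\qquad v_\eta=0 \text{ on }\partial\Omega_\e.
\end{equation*}
Since $\Omega_\e$ is a fixed bounded smooth set (Theorem \ref{T2} i)--ii)), the bounds \eqref{stA} show that $\eta^2 T_\eta$ has $O(\eta^2)$ operator norm from $X:=\{v\in C^{2,\alpha}(\overline\Omega_\e):v|_{\partial\Omega_\e}=0\}$ into $Y:=C^{0,\alpha}(\overline\Omega_\e)$, while the Lipschitz character of $\tilde f$ near $(p,0)$ yields $\tilde f(\exp_p(\eta x),\eta^2 v)\to \tilde f(p,0)$ uniformly on bounded subsets of $X$. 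Hence the formal limit as $\eta\to0$ is $-\Delta v_0=\tilde f(p,0)$ with zero Dirichlet data, whose unique solution is $u_\e$ under the natural normalization $\tilde f(p,0)=1$ implicit in \eqref{sstim}.

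Define $F:(-\delta,\delta)\times X\to Y$ by $F(\eta,v):=-\Delta v+\eta^2 T_\eta v-\tilde f(\exp_p(\eta x),\eta^2 v)$. The Lipschitz hypotheses on $\tilde f$ and $\partial_u\tilde f$ ensure that $F$ is continuous and $C^1$ in $v$ near $(0,u_\e)$, with partial Fréchet derivative
\[
D_v F(\eta,v)[h]=-\Delta h+\eta^2 T_\eta h-\eta^2\,\partial_u\tilde f\big(\exp_p(\eta x),\eta^2 v\big)\,h,
\]
which at $(0,u_\e)$ reduces to the Dirichlet Laplacian $-\Delta:X\to Y$, an isomorphism by classical Schauder theory. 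Moreover $F(0,u_\e)=-\Delta u_\e-\tilde f(p,0)=1-1=0$. The implicit function theorem then produces a unique continuous branch $\eta\mapsto v_\eta\in X$ with $F(\eta,v_\eta)=0$ and $v_\eta\to u_\e$ in $C^{2,\alpha}(\overline\Omega_\e)$; pushing forward through $\Phi_\eta$ one sets $\tilde u_\eta:=\eta^2 v_\eta\circ\Phi_\eta^{-1}$, which solves \eqref{general-f} and satisfies \eqref{sstim}.

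The main technical obstacle is to verify that $F$ and $D_vF$ depend continuously on $(\eta,v)$ in the Hölder topology. The linear piece $\eta^2 T_\eta v$ is harmless, since by \eqref{stB}--\eqref{stA} its coefficients are uniformly bounded on the fixed set $\Omega_\e$ with an $O(\eta^2)$ prefactor. The more delicate piece is the composition $v\mapsto \tilde f(\exp_p(\eta\cdot),\eta^2 v)$ viewed as a map $X\to Y$: its continuity and differentiability in $C^{0,\alpha}$ must be established through a standard composition lemma, relying on the Lipschitz character of $\tilde f$ and $\partial_u\tilde f$ near $(p,0)$ together with the embedding $X\hookrightarrow C^{0,1}(\overline\Omega_\e)$ and the smooth dependence of $\exp_p(\eta x)$ on its arguments. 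Once these continuity properties are in place, the implicit function theorem applies routinely and the stated convergence \eqref{sstim} follows.
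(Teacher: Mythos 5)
Your plan follows the same route as the paper: pull back via $\Phi_\eta$ to the fixed Euclidean domain $\Omega_\e$, rescale by $\eta^2$, and apply the implicit function theorem around $u_\e$ to the operator $F(\eta,v)=-\Delta v+\eta^2 T_\eta v-\tilde f(\exp_p(\eta x),\eta^2 v)$, which is exactly (up to sign) the paper's $\mathcal{G}$. Two remarks. First, the normalization $\tilde f(p,0)=1$ is not ``implicit in \eqref{sstim}'' --- the hypothesis is only $\tilde f(p,0)>0$, so if $\tilde f(p,0)\neq 1$ the limit of $\eta^{-2}\tilde u_\eta\circ\Phi_\eta$ would be $\tilde f(p,0)\,u_\e$ rather than $u_\e$; the paper handles this at the outset by the explicit change of unknown $v:=\tilde u/\tilde f(p,0)$, which replaces $\tilde f$ by $g(q,v):=\tilde f(q,\tilde f(p,0)v)/\tilde f(p,0)$ with $g(p,0)=1$, and you should do the same rather than wave at it. Second, the ``main technical obstacle'' you flag --- continuity of $F$ and of $D_vF$ from $(-\delta,\delta)\times X$ into $\mathcal{L}(X,Y)$ in the H\"older topology --- is precisely where the paper spends most of the proof, using the bounds \eqref{stA}, the Lipschitz regularity of the coefficients $A_{jk},A_k$ inherited from the smoothness of the metric, and the Lipschitz hypotheses on $\tilde f$ and $\partial_u\tilde f$; a complete argument must carry out those estimates rather than defer them to an unstated composition lemma.
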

\begin{proof}
We prove the result in the case that $\tilde f(p,0)=1$.  The general case
can be reduced to this by observing that, if $\tilde u$ is a solution to \eqref{general-f}, the function $v(x):= {\tilde u(x)} /{\tilde f(p,0)}$
satisfies  $-\Delta_g \tilde v= g(q,\tilde v)$  in $\widetilde\Omega_\eta$, with $g(q,v):=\tilde f(q,\tilde f(p,0)v)/\tilde f(p,0)
$, and  $g(p,0)=1$.

In our rescaled coordinates, $q=\Phi_\eta(x)=\exp_p(\eta x)$. Equation \eqref{general-f} becomes
\begin{equation}\label{general-f-coord}
\begin{cases}
                      \frac 1{\eta^2}\left[ -\Delta _{\R^d}u+\eta^2T_\eta u\right]= f(\eta x, u)  &
            \mbox{  in }\Omega_\e\\
u=0  & \mbox{  on }\partial \Omega_\e
\end{cases}
\end{equation}
where 
$$u=\tilde u\circ \Phi_\eta\quad\hbox{ and } f(\eta x, u):=\tilde f(\Phi_\eta(x),\tilde u).$$ 
Observe that $f(0,0)=\tilde f(p,0)=1$. 
Set 
$$u:=\eta^2 U$$ 
so that \eqref{general-f-coord} becomes
\begin{equation}\label{general-f-coord-U}
\begin{cases}
                       -\Delta _{\R^d}U+\eta^2T_\eta U=  f(\eta x, \eta^2 U) &
            \mbox{  in }\Omega_\e\\
U=0  & \mbox{  on }\partial \Omega_\e.
\end{cases}
\end{equation}

Solving \eqref{general-f-coord-U} produces solutions to \eqref{general-f}. This will be done by using the implicit function theorem.  To this end let us define the function
$$\mathcal{G}=\mathcal{G}:(-\delta_\e,\delta_\e)\times C^{2,\alpha}(\overline\Omega_\e)\cap  C^{0,\alpha}_0 (\overline \Omega_\e)\to C^{0,\alpha}(\overline\Omega_\e)$$
as
\beq \label{g-epsilon}
\mathcal{G}(\eta,U):=\Delta _{\R^d}U-\eta^2T_{\eta} U+ f(\eta x, \eta^2 U)\,.
\eeq 
Note this is well defined if  $0<|\eta|<\delta_\e$, and 
\[
\mathcal G(0,U)=\Delta _{\R^d}U+1\,.
\]

In order to apply the implicit function theorem we need to prove that
\begin{itemize}
\item[i)] $\mathcal{G}(0,u_\e)=0$.
\item[ii)] $\mathcal{G}$ is continuous in a neighborhood of $(0,u_\e)$.
\item[iii)] The derivative $\frac {\partial \mathcal{G}}{\partial U}$ exists in a neighborhood of $(0,u_\e)$ and it is continuous at $(0,u_\e)$.
\item[iv)] The operator $\left(\frac {\partial \mathcal{G}}{\partial U}\right)_{(0,u_\e)}$ is an isomorphism between $ C^{2,\alpha}(\overline\Omega_\e)\cap  C^{0,\alpha}_0 (\overline \Omega_\e)$ and $ C^{0,1}(\overline\Omega_\e)$. 
\end{itemize}
If ${\rm i)-iv)}$ hold by the implicit function theorem we get that,  there exists $\eta_{0,\e}>0$ such that for every $\eta\in (0,\eta_{0,\e})$ there exists a continuous curve $U_\eta$ of solutions to \eqref{general-f-coord-U} in $\Omega_\e$ such that
\beq\label{sti}
\|U_\eta-u_\e\|_{C^{2,\alpha}(\overline\Omega_\e)}\to0\quad\hbox{as }\eta\to0.
\eeq
The function $u_\eta:=\eta^2 U_\eta$ then solves \eqref{general-f-coord} and satisfies
\beq\label{sti-2}
\left\|\frac {u_\eta}{\eta^2}-u_\e\right\|_{C^{2,\alpha}(\overline\Omega_\e)}\to0\quad\hbox{as }\eta\to0,
\eeq
proving \eqref{sstim} and concluding the proof.

Let us verify ${\rm i)-iv)}$. Firsly, i) holds because $\mathcal{G}(0,u_\e)=\Delta _{\R^d}u_\e+1=0$ in $\Omega_\e$ by the definition of $u_\e$.

Next, let us analyze ii).  First we show that $\mathcal{G}(\eta,U)$ is continuous in $(0,u_\e)$.  Assume that $\eta_n\to 0$ and $U_n\to u_\e$ in $C^{2,\alpha}(\overline\Omega_\e)\cap C^{0,\alpha}_0(\overline \Omega_\e)$. 
By the definition of $\mathcal G$ we have,
\[
\mathcal G(\eta_n, U_n)-\mathcal G(0,u_\e)=\Delta _{\R^d}(U_n-u_\e)-\eta_n^2T_{ \eta_n} U_n+ f( \eta_n x, \eta_n^2 U_n)-1.
\]
Observe that 
\[\|\Delta _{\R^d}(U_n-u_\e)\|_{C^{0,\alpha}(\overline\Omega_\e)}\leq C \|U_n-u_\e\|_{C^{2,\alpha}(\overline\Omega_\e)}\to 0 \ \hbox{ as } n\to \infty,\]
and, by the assumptions on $\tilde f$,
 \[|f( \eta_n x, \eta_n^2 U_n)-1|=|f( \eta_n x, \eta_n^2 U_n)-f(0,0)|\leq C\left(|\eta_n| |x| +\eta_n^2 |U_n|\right)\]
where $C$ is the Lipschitz constant of $\tilde f$ in a neighborhood of $(p,0)$. 

Then 
\[\begin{split}
\|f(\eta_n x, \eta_n^2 U_n)-1\|_{C^{0,\alpha}(\overline\Omega_\e)}&\leq C |\eta_n| \  \| x\|_{C^{0,\alpha}(\overline\Omega_\e)}+C\eta_n^2 \|U_n\|_{C^{0,\alpha}(\overline\Omega_\e)}\\
&\leq  C |\eta_n| \ \| x\|_{C^{0,\alpha}(\overline\Omega_\e)}+2C \eta_n^2  \|u_\e\|_{C^{0,\alpha}(\overline\Omega_\e)} \to 0 \ \hbox{ as } n\to \infty,
\end{split}
\]
since 
$\| x\|_{C^{0,\alpha}(\Omega_\e)}\leq (diam (\Omega_\e))^{1-\alpha}$ and $\e$ is fixed, and $\|u_\e\|_{C^{0,\alpha}(\Omega_\e)}\leq C$ by {\rm i)} of Theorem \ref{T2}. 
Further, by the definition of $T_\eta$ in \eqref{stB},  we have that 
\beq\label{4.19}
\begin{split}
\|\eta_n^2 T_{\eta_n} U_n\|_{C^{0,\alpha}(\overline\Omega_\e)}&\leq \eta_n^2
\Big(\|U_n\|_{C^2(\overline\Omega_\e)}\sup_{j,k}\|A_{jk}(\eta_n,x)\|_{C^{0,\alpha}(\overline\Omega_\e)}
+\|U_n\|_{C^{2,\alpha}(\overline\Omega_\e)}\sup_{j,k}\|A_{jk}(\eta_n,x)\|_{C^{0}(\overline\Omega_\e)}
\\
&+\|U_n\|_{C^1(\overline\Omega_\e)}\sup_{k}\|A_{k}(\eta_n,x)\|_{C^{0,\alpha}(\overline\Omega_\e)}+\|U_n\|_{C^{1,\alpha}(\overline\Omega_\e)}\sup_{k}\|A_{k}(\eta_n,x)\|_{C^{0}(\overline\Omega_\e)}\Big)
\end{split}
\eeq
and, by \eqref{stA}, $|A_{jk}(\eta_n,x)|\leq  C|x|^2$ and $|A_{k}(\eta_n,x)|\leq C |x|$. 

Moreover the coefficients $A_{jk}(\eta_n,x)$ have the same regularity in $x$ as the functions $h_{jk}$ in \eqref{metric}, while the coefficients $A_{k}(\eta_n,x)$ have the same regularity in $x$ as the functions $\frac {\partial h_{jk}}{\partial x_i}$. Since the exponential map is a diffeomorphisms then the functions $h_{ij}(x)$ are smooth, and hence the functions $A_{jk}(\eta_n,x)$ and $A_{k}(\eta_n,x)$ are Lipschitz with respect to $x$. This implies that 
\[\begin{split}
\|A_{jk}(\eta_n,x)\|_{C^{0,\alpha}(\overline\Omega_\e)}&\leq C (diam (\Omega_\e))^{1-\alpha}\leq C\\
\|A_{k}(\eta_n,x)\|_{C^{0,\alpha}(\overline\Omega_\e)}&\leq C (diam (\Omega_\e))^{1-\alpha}\leq C
\end{split}\]
since $\e$ is fixed. 
 
 Recalling that $\|U_n\|\leq 2 \|u_\e\|$ then \eqref{4.19} implies that
\[\|\eta_n^2 T_{\eta_n} U_n\|_{C^{0,\alpha}(\overline\Omega_\e)}\to 0 \ \hbox{ as } n\to \infty,\]
which gives the continuity of $\mathcal G(\eta, U)$ in $(0,u_\e)$, when $\eta_n\to 0$. The continuity of $\mathcal G(\eta, U)$  in a neighborhood of $(0,u_\e)$ follows in a very similar manner.

We now consider iii). By the definition of $\mathcal G$, since $T_\eta$ is a linear operator, we have  
\[\left(\frac{\partial \mathcal{G}}{\partial U}\right)(\eta,U)V=
\Delta_{\R^d} V-\eta^2T_\eta V+\eta^2 \frac{\partial f}{\partial u}(\eta x,\eta ^2 U) V \  \hbox{ for }0<|\eta |<\delta_\e.
\]
We extend $\left(\frac{\partial \mathcal{G}}{\partial U}\right)$ in $\eta=0$ letting
\[\left(\frac{\partial \mathcal{G}}{\partial U}\right)(0,U)V=\Delta_{\R^d} V\]
 for every $V\in C^{2,\alpha}(\overline\Omega_\e)\cap C^{0,\alpha}_0(\overline \Omega_\e)$ and for every $U$ in a neighborhood of $u_\e$ in $C^{2,\alpha}(\overline\Omega_\e)\cap C^{0,\alpha}_0(\overline \Omega_\e)$.
We want to show that
\[\left\|\left(\frac{\partial \mathcal{G}}{\partial U}\right)(\eta,U)-\left(\frac{\partial \mathcal{G}}{\partial U}\right)(0,u_\e)\right\|_{\mathcal L(X,Y)}\to 0 \hbox{ as }\eta\to 0,
\]
where $\mathcal L(X,Y)$ denotes the space of linear operator from $X:=C^{2,\alpha}(\overline\Omega_\e)\cap C^{0,\alpha}_0(\overline \Omega_\e)$ into $Y:= C^{0,\alpha}(\overline \Omega_\e)$.

Assume that $\eta_n\to 0$ and $U_n\to u_\e$ in $X$.  We have 
\[\left\|\eta_n^2 \frac{\partial f}{\partial u}(\eta x,\eta_n ^2 U_n) V\right\|_{Y}\leq \eta_n^2\left( c_1 \|V\|_Y+c_2 \|V\|_{C^0(\Omega_\e)}\left(|\eta_n| \ \|x\|_{Y}+\eta_n^2\|U_n\|_{Y}
\right)\right)
\] 
where $c_1=2 \frac{\partial f}{\partial u}(0,0)$ and $c_2$ is the Lipschitz constant of $ \frac{\partial f}{\partial u}$ in a neighborhood of $(0,0)$. This proves that
$\|\eta_n^2 \frac{\partial f}{\partial u}( \eta x,\eta_n ^2 U_n) \|_{\mathcal L(X,Y)}\to 0$ as $n\to \infty$.
Finally, reasoning as in \eqref{4.19}, one can prove that 
\[\|\eta_n^2 T_{ \eta_n} \|_{\mathcal L(X,Y)}\to 0 \hbox{ as } n\to \infty,\]
concluding the proof of the continuity of $\left(\frac{\partial \mathcal{G}}{\partial U}\right)$ in $(0,u_\e)$.

To conclude, note that iv) holds since
\[\left(\frac{\partial \mathcal{G}}{\partial U}\right)(0,u_\e)V=\Delta_{\R^d} V,
\]
which is an isomorphism between $ C^{2,\alpha}(\overline \Omega_\e)\cap  C^{0,\alpha}_0 (\overline \Omega_\e)$ and $ C^{0,\alpha}(\overline\Omega_\e)$.

Therefore, the implicit function theorem applies to the operator $\mathcal{G}$ defined in \eqref{g-epsilon} and from $\mathcal{G}(0,u_\e)=0$ we get that
there exists $\eta_{0,\e}>0$ such that for every $\eta\in (-\eta_{0,\e},\eta_{0,\e})$ there exists a continuous curve $U_\eta$ such that
$$\mathcal{G}(\eta,U_\eta)=0.$$
Hence, for any $\eta\in (0,\eta_{0,\e})$
\beq
\tilde u_\eta(q)=\eta^2U_\eta\big(\Phi_\eta^{-1}(q)\big)=\eta^2U_\eta\big(\exp^{-1}(\eta q)\big)
\eeq
is a solution to \eqref{general-f} satisfying (using \eqref{sti})
\beq
\left|\left|\frac{\tilde u_\eta\big(\exp_p(\eta -)\big)}{\eta^2}-u_\e
\right|\right|_{C^{2,\alpha}(\overline \Omega_\e)}\to0\quad\hbox{as }\eta\to0,
\eeq
which ends the proof.
\end{proof}

An immediate consequence of the previous proposition is the following:

\begin{corollary}\label{corg}
For any integer $n$ fixed, $n\geq 2$,  for every $\e$ fixed, that verifies $0<\e<\e_0$, there exists $\eta_{1,\e}>0$ such that for any $0<\eta<\eta_{1,\e}$
the problem \eqref{general-f} admits a solution $\tilde u_\eta$ that has exactly $n$ nondegenerate local maximum points and $n-1$ nondegenerate saddle points in $\widetilde\Omega_\eta$.
\end{corollary}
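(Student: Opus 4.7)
The plan is to transfer the critical point structure of $u_\e$ established in Theorem~\ref{T2}~iv) to $\tilde u_\eta$ via the $C^{2,\alpha}$ convergence of Proposition~\ref{prog}. Let $\tilde u_\eta$ be the solution from that proposition and set $U_\eta:=(\tilde u_\eta\circ\Phi_\eta)/\eta^2:\Omega_\e\to\R$, so that $\|U_\eta-u_\e\|_{C^{2,\alpha}(\overline\Omega_\e)}\to 0$ as $\eta\to 0$. Since $\Phi_\eta$ is a diffeomorphism of $\Omega_\e$ onto $\widetilde\Omega_\eta$, the critical points of $\tilde u_\eta$ correspond bijectively via $\Phi_\eta$ to the critical points of $U_\eta$ (the differential of a function is a covector, so $d\tilde u_\eta|_{\Phi_\eta(x)}=0$ iff $dU_\eta(x)=0$). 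At such a critical point the Hessian is an intrinsic symmetric bilinear form on the tangent space, and by Sylvester's law of inertia its nondegeneracy and signature are preserved under $\Phi_\eta$. It therefore suffices to show that $U_\eta$ has precisely $n$ nondegenerate local maxima and $n-1$ nondegenerate saddles in $\Omega_\e$ for $\eta$ small.

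By Theorem~\ref{T2}~iv), $u_\e$ has exactly $2n-1$ critical points $P_1,\ldots,P_{2n-1}\in\Omega_\e$, all nondegenerate, with $n$ of them maxima and $n-1$ saddles. Around each $P_j$ choose small pairwise disjoint balls $B_j=B(P_j,r)\Subset\Omega_\e$. Since $\nabla^2 u_\e(P_j)$ is invertible, Taylor's formula yields $|\nabla u_\e(x)|\geq c_j|x-P_j|$ on $B_j$ for $r$ small, hence $|\nabla u_\e|\geq c_j r$ on $\partial B_j$. The $C^1$ convergence $\nabla U_\eta\to \nabla u_\e$ then ensures that, for $\eta$ small, $\nabla U_\eta$ does not vanish on $\partial B_j$ and is homotopic to $\nabla u_\e$ through nonvanishing vector fields there. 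Brouwer degree invariance gives
\[
\deg(\nabla U_\eta,B_j,0)=\deg(\nabla u_\e,B_j,0)=\operatorname{sgn}\det\nabla^2 u_\e(P_j)=\pm 1,
\]
producing at least one critical point of $U_\eta$ in $B_j$. Uniqueness follows from the $C^0$ convergence of Hessians: for $\eta$ small $\nabla^2 U_\eta$ remains invertible throughout a slightly smaller ball around $P_j$, so $\nabla U_\eta$ is a local diffeomorphism there. Since the signature of an invertible symmetric matrix depends continuously on its entries, the unique critical point of $U_\eta$ in $B_j$ is nondegenerate and of the same type (maximum or saddle) as $P_j$.

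It remains to exclude critical points of $U_\eta$ outside $\bigcup_j B_j$. On the compact set $K_r:=\overline\Omega_\e\setminus\bigcup_j B_j$, the gradient $\nabla u_\e$ is bounded away from zero: in the interior because the critical points of $u_\e$ all lie in $\bigcup_j B_j$, and near $\partial\Omega_\e$ because the starshapedness computation in the proof of Theorem~\ref{T2}~ii) gives $x\cdot\nabla u_\e\leq-\tfrac1{2(d-1)}$, which forces $|\nabla u_\e|$ to be uniformly positive in a neighborhood of the boundary. By compactness $|\nabla u_\e|\geq c>0$ on $K_r$, and $C^1$-closeness yields $|\nabla U_\eta|\geq c/2$ on $K_r$ for $\eta$ small. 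Together with the previous paragraph this shows that $U_\eta$ has exactly $n$ nondegenerate maxima and $n-1$ nondegenerate saddles in $\Omega_\e$; by the correspondence via $\Phi_\eta$, the same holds for $\tilde u_\eta$ in $\widetilde\Omega_\eta$. The only nontrivial step is the local degree/implicit function argument in the middle paragraph; all the genuine analytic difficulty has already been absorbed into Theorem~\ref{T2} and Proposition~\ref{prog}.
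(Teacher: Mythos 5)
Your proposal is correct and follows essentially the same two-step strategy as the paper (transfer $u_\e\leadsto U_\eta$ on $\Omega_\e$ via the $C^{2,\alpha}$ estimate of Proposition~\ref{prog}, then transfer $U_\eta\leadsto\tilde u_\eta$ via the diffeomorphism $\Phi_\eta$), but you fill in two places where the paper is terse. First, the paper simply asserts from \eqref{sti-2} that $U_\eta=\eta^{-2}u_\eta$ has the same critical point structure as $u_\e$; you supply the standard justification via Brouwer degree near each nondegenerate critical point, uniform lower bounds on $|\nabla u_\e|$ away from them (including near $\partial\Omega_\e$ using the starshapedness estimate), and local invertibility of $\nabla^2 U_\eta$ for uniqueness. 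Second, for the diffeomorphism transfer the paper does an explicit coordinate computation with the inverse metric $g^{jk}$ to show the pushed-forward Hessian remains invertible, whereas you invoke the intrinsic nature of $du$ and of the Hessian at a critical point together with Sylvester's law of inertia — a cleaner, coordinate-free route to the same conclusion. One minor point to tighten: after locating a critical point of $U_\eta$ in $B_j=B(P_j,r)$ by degree and arguing that $\nabla^2 U_\eta$ is invertible on a smaller ball $B(P_j,r')$, you should note (using $|\nabla u_\e|\ge c_j|x-P_j|$) that $\nabla U_\eta$ cannot vanish on the annulus $B(P_j,r)\setminus B(P_j,r')$ for $\eta$ small, so the critical point indeed lies where the Hessian is invertible; alternatively, shrink $r$ from the start so $\nabla^2 u_\e$ (hence $\nabla^2 U_\eta$) is uniformly invertible on all of $B_j$.
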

\begin{proof}
By Theorem \ref{T2}  the solution $u_\e$ to \eqref{tor} has exactly $n$ local, nondegenerate maximum points and $n-1$ nondegenerate saddle points in $\Omega_\e$.  By \eqref{sti-2} then there exists $\hat \eta_{1,\e}>0$ such that, for every $0<\eta<\hat \eta_{1,\e}$ the function $\frac 1{\eta^2}u_\eta$ has 
exactly $n$ maximum points and $n-1$  saddle points in $\Omega_\e$ and they are all nondegenerate.  The same is true, of course, for the function $u_\eta$.  Letting $\bar x$ be one of the critical points of $u_\eta$ and $\bar q=\Phi_\eta(\bar x)$ then by \eqref{metric} and \eqref{metric-inv} we have
\[\begin{split}
\frac{\partial  \tilde u_\eta }{\partial x_k}(\bar q)&=\sum _{j=1}^d
g^{kj}\frac{\partial u_\eta }{\partial x_j}(\bar x)=0,
\\
\frac{\partial ^2 \tilde u_\eta }{\partial x_k\partial x_h}(\bar q)&=\sum _{i,j=1}^d
g^{kj}g^{hi}\frac{\partial ^2 u_\eta }{\partial x_j\partial x_i}(\bar x)=\frac 1{\eta^4}\frac{\partial ^2 u_\eta }{\partial x_k\partial x_h}(\bar x) (1+o(1))
\end{split}
\]
as $\eta\to 0$. This implies that, all the critical points of $u_\eta$ provide critical points of $\tilde u_\eta$ and viceversa, since the matrix $g^{kj}$ is invertible.
Then there exists $\eta_{1,\e}>0$ such that, for any $0<\eta<\eta_{1,\e}$,  we have $\det \nabla^2 \tilde u_\eta (\bar q)\neq 0$, so that all the critical points of $\tilde u_\eta$ are nondegenerate. This concludes the proof.
\end{proof}

\section{Proof of Theorem \ref{T1}}\label{s5}
We start recalling some definitions:
\begin{definition}\label{def-geo-star}
A set $U\subset \mathcal M$ is {\em geodesically starshaped} with respect to a point~$q\in U$ if for every $z\in U$ there exists a unique minimizing geodesic connecting $q$ and~$z$ contained in $U$. Equivalently, there exists some set $V\subset \R^d$, starshaped with respect to the origin, such that $U=\exp_q(V)$. 
\end{definition}

\begin{definition}\label{def-conv}
A set $U\subset \mathcal M$ is {\em geodesically convex}\/ if for every $q,z\in U$ there exists a connecting minimizing geodesic contained in $U$.
 Equivalently, $U$ is geodesically starshaped with respect to all its points, so for every  $q\in U$ there is a starshaped set $V_q\subset\R^d$ such that  $U=\exp_q(V_q)$.
\end{definition}

Using these definition, it is easy to make precise the idea that a set (or rather a sequence of sets) is ``almost'' geodesically convex:
\begin{definition}\label{alcon}
A sequence of sets $U_k\subset \mathcal M$ is {\em almost geodesically convex}\/ if
\beq\label{z}
\frac{\mea_g \big(U_k\setminus \Sta_g (U_k)\big)}{\mea_g \big(U_k\big)}\to0\quad\hbox{as }k\to+\infty,
\eeq
where 
$\Sta_g (U_k)$ denotes the set of points of $U_k$ with respect to which $U_k$ is geodesically starshaped.
\end{definition}

We are in position to prove our main result:
\begin{proof}[Proof of Theorem \ref{T1}]
Take a sequence $\e_k$ such that $0<\e_k<\e_0$, with $\e_0$ as in Theorem \ref{T2},  and $\e_k\to 0$, as $k \to \infty$.  Corresponding to every $\e_k$ fixed,  there exists, by Proposition \ref{prog} and Corollary \ref{corg},  $\eta_{1,\e_k}>0$ such that, for every $\eta \in (0,\eta_{1,\e_k})$ problem \eqref{general-f} admits a solution $u_\eta$ in $\widetilde \Omega_\eta:=\Phi_{\eta}(\Omega_{\e_k})$, by \eqref{omt},  which has exactly $n$ nondegenerate maximum points and $n-1$ nondegenerate saddle points. For every $k$, we choose a value $\eta_k\in (0,\eta_{1,\e_k})$ such that $\widetilde \Omega_{\eta_k}=\Phi_{\eta_k}(\Omega_{\e_k})$ and 
$\tilde u_k$ is the solution to \eqref{general-f} in $\widetilde \Omega_{\eta_k}$.  As said before the solutions $\tilde u_k$ have exactly $n$ nondegenerate maximum points in $\widetilde \Omega_{\eta_k}$ and $n-1$ nondegenerate saddle points in $\widetilde \Omega_{\eta_k}$, proving a).

Since the domains $\Omega_{\e_k}$ are smooth by Theorem~\ref{T2} then  the domains $\widetilde \Omega_{\eta_k}$ are smooth too.  Moreover the domains $\Omega_{\e_k}$ are all starshaped with respect to the origin by Theorem~\ref{T2}, and this implies that the sequence of domains $\widetilde \Omega_{\eta_k}=\Phi_{\eta_k}\left(\Omega_{\e_k}\right)$ are geodesically starshaped  with respect to $p$, by Definition \ref{def-geo-star}. This proves {\rm b)}.

The proof of point $c)$ is slightly more delicate and requires a few more details. We aim to show that the set $\eta_k D_{\e_k}$, with $D_{\e_k}$ defined in \eqref{d-epsilon} is contained in $\Sta_g(\widetilde \Om_{\eta_k})$, provided that $\e_k,\eta_k$ are suitably small. From Definitions \ref{def-geo-star} and \ref{def-conv} a point $q\in \widetilde \Om_{\eta_k}$ belongs to $\Sta_g(\widetilde\Om_{\eta_k})$ if there exists a set $V_q\subset \R^d$, starshaped with respect to the origin, such that 
\[\widetilde \Om_{\eta_k}=\exp_q(V_q) \ \text{or, equivalently, } \ V_q=\exp_q^{-1}(\widetilde \Om_{\eta_k}).\]
Moreover, since $q\in \widetilde \Om_{\eta_k}$,  we have $q=\exp_p(\eta_k \xi)$ for some $\xi\in \Om_{\e_k}$. This naturally leads us to introduce the map
\begin{equation}\label{fe}
\psi_{k,\xi}:=\exp_{\exp_p(\eta_k\xi)}^{-1}\circ\exp_p
\end{equation}
that we will apply to points in $\eta_k \Om_{\e_k}$, 
and define the set
\begin{equation}
\Om_{k,\xi}:= \frac 1{\eta_k}\psi_{k,\xi}(\eta_k \Om _{\e_k}).
\end{equation}

By picking a decreasing sequence of small constants $0<\eta_{k+1}<\eta_k$, we can assume that the closure of~ $\eta_k\Om_{\e_k}$ is contained in a small disk  $S\subset \R^d$  for all $k$. Hence the map $\psi_{k,\xi}$ is well defined on $S$ for all~$k$.
 Furthermore, since $\psi_{k,\xi}$ is a smooth function of~$\xi$ and satisfies $\psi_{k,0}=I$, it is clear that $\psi_{k,\xi}$ is close to the identity in the sense that 
\[
\sup_{\xi\in \Om_{\e_k}}\|\psi_{k,\xi}-I\|_{C^2(S)}\le C\eta_k\,.
\]
More precisely,
\begin{equation}\label{psi-svil}
\psi_{k,\xi}(x)=x-\eta_k\xi + \psi''_{k,\xi}(x)\,,
\end{equation}
where the second-order correction is bounded as $\|\psi''_{k,\xi}\|_{C^2(S)}\le C\eta_k^2$. The constant~$C$ depends on~$\e_k$ but not on the small parameter $\eta_k$.

As a consequence of this, given any point $\xi\in\Om_{\e_k}$, one can write 
\[
\widetilde\Om_{\eta_k}=\exp_{\exp_p(\eta_k\xi)}(\eta_k \Om_{k,\xi})\,,
\]
where $\Om_{k,0}=\Om_{\e_k}$ and, in general, by \eqref{psi-svil}
\begin{equation}\label{E.translation}
\Om_{k,\xi}=\frac1{\eta_k}\psi_{k,\xi}(\eta_k\Om_{\e_k})=\Om_{\e_k}-\xi+ \frac{\psi''_{k,\xi}(\eta_k\Om_{\e_k})}{\eta_k}.
\end{equation}
We can now define the map 
\begin{equation}\label{psi-k}
\Psi_{k,\xi}(x):=x+ \eta_k^{-1}\psi''_{k,\xi}(\eta_kx)
\end{equation}
which is well defined on $\frac S{\eta_k} $ and 
is then close to the identity in~$C^2(\frac S {\eta_k} )$ because
\[
\|\Psi_{k,\xi}-I\|_{C^2(\frac S {\eta_k} )}=\frac1{\eta_k}\|\psi''_{k,\xi}\|_{C^2(S)}<C\eta_k,
\]
where the constant $C$ depends on $\e_k$ but not on $\eta_k$. Then
\[
\sup_{\xi\in \Om_{\e_k}}\|\Psi_{k,\xi}-I\|_{C^2(\frac S{\eta_k})}<C\eta_k.
\]
Also, note that the large balls $S/\eta_k$ satisfy
\[
\frac S{\eta_1}\subset \frac S{\eta_2}\subset\cdots 
\]
because the sequence $\eta_k$ is decreasing.

Since $\Om_{k,\xi}$ only differs from $\Psi_{k,\xi}(\Om_{\e_k})$ by a translation by~$\xi\in\Om_{\e_k}$ as a consequence of~\eqref{E.translation}, we conclude from Corollary~\ref{C.perturbation} that the set $D_{\e_k}$, as defined in Proposition~\ref{lem1}, satisfies
\[
D_{\e_k}-\xi\subset \Sta(\Om_{k,\xi})
\]
provided that $\eta_k$ is small enough (depending on~$\e_k$). In this case, $\Om_{k,\xi}\subset\R^d$ is starshaped with respect to the origin whenever $\xi\in D_{\e_k}$, which implies 
\[
\widetilde D_k:=\exp_p(\eta_k D_{\e_k})\subset\Sta_g(\widetilde\Om_{\eta_k})\,.
\]
To prove c), using Definition \ref{alcon}, we then only have to compute
\begin{align*}
\frac{\mea_g \big(\widetilde\Omega_{\eta_k}\setminus \Sta_g (\widetilde\Omega_{\eta_k})\big)}{\mea_g \big(\widetilde\Omega_{\eta_k}\big)}&\le \frac{\mea_g \big(\widetilde\Omega_{\eta_k}\setminus \widetilde D_k\big)}{\mea_g \big(\widetilde\Omega_{\eta_k}\big)}
=\frac{\int _{\Omega_{\e_k}\setminus D_{\e_k}}\rho_k(\eta_kx) dx}{\int _{\Omega_{\e_k}}\rho_k(\eta_kx) dx}\,,\end{align*}
where $dV_g=:\eta_k^d \rho_k(\eta_kx) dx$ is the volume form written in rescaled normal coordinates. Since $|\rho_k(\eta_kx)-1|\le C\eta_k$, for $k$ large enough, 
\begin{align*}
\frac{\mea_g \big(\widetilde\Omega_{\eta_k}\setminus \Sta_g (\widetilde\Omega_{\eta_k})\big)}{\mea_g \big(\widetilde\Omega_{\eta_k}\big)}&\le \frac{(1+C\eta_k)\int _{\Omega_{\e_k}\setminus D_{\e_k}
}dy}{(1-C\eta_k)\int _{\Omega_{\e_k}}dy}\\
&\le
(1+3 C\eta_k)\frac{\mea \big(\Omega_{\e_k}\setminus D_{\e_k}\big)}{\mea \ (\Omega_{\e_k})}\to0
\end{align*}
as $k\to \infty$ by \eqref{finale} of Lemma \ref{lem2}.  Thus,  claim $c)$ follows.
\end{proof}

\noindent\textit{Acknowledgments.}
This work has been developed within the framework of the project e.INS- Ecosystem of Innovation for
Next Generation Sardinia (cod. ECS 00000038) funded by the Italian Ministry for Research and Education
(MUR) under the National Recovery and Resilience Plan (NRRP) - MISSION 4 COMPONENT 2, "From
research to business" INVESTMENT 1.5, "Creation and strengthening of Ecosystems of innovation" and
construction of "Territorial R\&D Leaders".
The second author is funded by Next Generation EU-
CUP J55F21004240001, “DM 737/2021 risorse 2022–2023”. The last two authors are partially funded by Gruppo Nazionale per l’Analisi Matematica, la Probabilità e le loro Applicazioni (GNAMPA) of the Istituto Nazionale di Alta Matematica (INdAM). This work has received funding from the European Research Council (ERC) under the European Union's Horizon 2020 research and innovation programme through the grant agreement~862342 (A.E.). A.E.\ is partially supported by the grants CEX2023-001347-S, RED2022-134301-T and PID2022-136795NB-I00 funded by MCIN/AEI.

\bibliography{EncisoGladialiGrossiV4.bib}
\bibliographystyle{abbrv}

\end{document}